\def\E{{\mathbb E }}
\def\eps{{\varepsilon}}
\def\Bbb E{\mathbb{E}}
\def\Bbb R{\mathbb{R}}
\newtheorem{lemma}{Lemma}
\newtheorem{theorem}{Theorem}
\newtheorem{definition}{Definition}
\newtheorem{corollary}{Corollary}
\makeatletter \@addtoreset{equation}{section}
\font\tencmmib=cmmib10 \skewchar\tencmmib '60
\font\tenmsb=msbm10 
\def\Bbb#1{\hbox{\tenmsb#1}}
\def\bbox{\quad\hbox{\vrule \vbox{\hrule \vskip2pt \hbox{\hskip2pt
\vbox{\hsize=1pt}\hskip2pt} \vskip2pt\hrule}\vrule}}
\def\lessim{\ \lower4pt\hbox{$
\buildrel{\displaystyle <}\over\sim$}\ }
\def\gessim{\ \lower4pt\hbox{$\buildrel{\displaystyle >}
\over\sim$}\ }
\def\eps{\varepsilon}
\def\go0{\to 0}
\def\leftitem#1{\item{\hbox to\parindent{\enspace#1\hfill}}}
\def\qed{{$\hfill \bbox$}}
\def\sg{\sigma}
\def\sg2{\sigma^2}
\def\__{_{\infty}}
\numberwithin{equation}{section} \theoremstyle{plain}
\newcommand{\1}{{\rm 1}\kern-0.24em{\rm I}}
\def\E{\mathbb E}
\begin{document}

\begin{frontmatter}
\title{Concentration Inequalities and Moment Bounds for Sample Covariance Operators} \runtitle{Concentration of sample covariance}

\begin{aug}
\author{\fnms{Vladimir} \snm{Koltchinskii}\thanksref{t1}\ead[label=e1]{vlad@math.gatech.edu}} and 
\author{\fnms{Karim} \snm{Lounici}\thanksref{m1}\ead[label=e2]{klounici@math.gatech.edu}}
\thankstext{t1}{Supported in part by NSF Grants DMS-1207808, CCF-0808863 and CCF-1415498}
\thankstext{m1}{Supported in part by NSF Grant DMS-11-06644 and by Simons Foundation Grant 315477}
\runauthor{V. Koltchinskii and K. Lounici}

\affiliation{Georgia Institute of Technology\thanksmark{m1}}

\address{School of Mathematics\\
Georgia Institute of Technology\\
Atlanta, GA 30332-0160\\
\printead{e1}\\
\printead*{e2}
}
\end{aug}

\begin{abstract}
Let $X,X_1,\dots, X_n,\dots$ be i.i.d. centered Gaussian random variables in a separable 
Banach space $E$ with covariance operator $\Sigma:$
$$
\Sigma:E^{\ast}\mapsto E,\ \ \Sigma u = {\mathbb E}\langle X,u\rangle, u\in E^{\ast}.
$$
The sample covariance operator $\hat \Sigma:E^{\ast}\mapsto E$ is defined as 
$$
\hat \Sigma u := n^{-1}\sum_{j=1}^n \langle X_j,u\rangle X_j, u\in E^{\ast}.
$$
The goal of the paper is to obtain concentration inequalities and expectation bounds for the 
operator norm $\|\hat \Sigma-\Sigma\|$ of the deviation of the sample covariance operator 
from the true covariance operator. In particular, it is shown that 
$$
{\mathbb E}\|\hat \Sigma-\Sigma\|\asymp 
\|\Sigma\|\biggl(\sqrt{\frac{{\bf r}(\Sigma)}{n}}\bigvee \frac{{\bf r}(\Sigma)}{n}\biggr),
$$
where 
$$
{\bf r}(\Sigma):=\frac{\Bigl({\mathbb E}\|X\|\Bigr)^2}{\|\Sigma\|}.
$$
Moreover, it is proved that,  under the assumption that ${\bf r}(\Sigma)\leq n,$    
for all $t\geq 1,$
with probability at least $1-e^{-t}$
\begin{align}
&
\nonumber
\Bigl|\|\hat\Sigma - \Sigma\|-M\Bigr| 
\lesssim \|\Sigma\|\biggl(\sqrt{\frac{t}{n}}\bigvee \frac{t}{n}\biggr),
\end{align}
where $M$ is either the median, or the expectation of $\|\hat \Sigma-\Sigma\|.$ 
On the other hand, under the assumption that ${\bf r}(\Sigma)\geq n,$ 
for all $t\geq 1,$ with probability at least $1-e^{-t}$
\begin{align}
&
\nonumber
\Bigl|\|\hat\Sigma - \Sigma\|-M\Bigr| 
\lesssim \|\Sigma\|\biggl(\sqrt{\frac{{\bf r}(\Sigma)}{n}}\sqrt{\frac{t}{n}}\bigvee \frac{t}{n}\biggr). 
\end{align}
\end{abstract}



\end{frontmatter}
%
%
%



\section{Introduction}

Let $(E,\|\cdot\|)$ be a separable Banach space with the dual space 
$E^{\ast}.$ For $x\in E, u\in E^{\ast},$ $\langle x,u\rangle$ denotes 
the value of linear functional $u$ at vector $x.$ 
Let $X$ be a centered random variable in $E$ with  
${\mathbb E}|\langle X,u\rangle|^2<+\infty, u\in E^{\ast}$ (that is, $X$ is weakly 
square integrable). 
Let  
$$
\Sigma u := {\mathbb E}\langle X,u\rangle X,\ u\in E^{\ast}.
$$ 
It is well known that this defines a bounded symmetric nonnegatively definite operator 
$\Sigma : E^{\ast}\mapsto E$ that is called \it the covariance operator \rm 
of random variable $X.$ Moreover, if ${\mathbb E}\|X\|^2<+\infty$ (so, $X$ is strongly 
square integrable), then it is also well known that the covariance operator $\Sigma$ is nuclear. Recall that a linear operator $A$ from a Banach space $E_1$ into a 
Banach space $E_2$ is called nuclear iff  
there exist sequences $\{x_n:n\geq 1\}\subset E_1^{\ast},\ \{y_n:n\geq 1\}\subset E_2$ such 
that 
\begin{equation}
\label{nuce_1}
A u = \sum_{n\geq 1} \langle u, x_n\rangle y_n, u\in E_1 
\end{equation}
and 
\begin{equation}
\label{nuce_2}
\sum_{n\geq 1}\|x_n\|\|y_n\|<\infty.
\end{equation}
The nuclear norm $\|A\|_1$ is defined as the infimum of the sums (\ref{nuce_2})
over all the sequences $\{x_n:n\geq 1\}\subset E_1^{\ast},\ \{y_n:n\geq 1\}\subset E_2$ such that 
representation (\ref{nuce_1}) holds. 

Let $X_1,\dots,X_n$
be i.i.d. copies of $X.$ The sample (empirical) covariance operator based on the observations $(X_1,\dots, X_n)$ is defined as the operator $\hat \Sigma: E^{\ast}\mapsto E$ such that
$$
\hat \Sigma u :=n^{-1}\sum_{j=1}^n \langle X_j,u\rangle X_j,\ u\in E^{\ast}.
$$
Clearly, this is an operator of rank at most $n$ and it is an unbiased estimator 
of the covariance operator $\Sigma .$

In this paper, we are interested in the case when $X$ is a centered Gaussian random 
vector in $E$ with covariance $\Sigma.$ This implies that ${\mathbb E}\|X\|^2<+\infty$ (in fact, $\|X\|$ is even a random variable with a finite $\psi_2$-norm, see \cite{Ledoux}, Chapter 3) and, as a consequence, the covariance operator $\Sigma$ is nuclear. For operators $A: E^{\ast}\mapsto E,$ $\|A\|$
will denote the operator norm: 
$$
\|A\|:=
\sup_{u\in E^{\ast}, \|u\|\leq 1} \|Au\|
= \sup_{u,v\in E^{\ast}, \|u\|\leq 1, \|v\|\leq 1} \Bigl|\langle Au,v\rangle\Bigr|.
$$

Several other definitions and notations will be used throughout the paper. In particular,
the relationship $B_1\lesssim B_2$ (for nonnegative $B_1, B_2$) means that there exists an absolute constant $c\in (0,\infty)$ such that $B_1\leq cB_2.$ Similarly, $B_1 \gtrsim B_2$ means that $B_1\geq cB_2$ for an absolute constant $c.$ If both $B_1\lesssim B_2$ and $B_1\gtrsim B_2,$ we write 
$B_1\asymp B_2.$ Sometimes, symbols $\lesssim, \gtrsim, \asymp$ are provided with subscripts 
indicating possible dependence of constant $c$ on other constants (say, $B_1\lesssim_{a} B_2$
would mean that $B_1\leq cB_2$ with $c$ that might depend on $a$). 

We will also use occasionally Orlicz norms (such as $\psi_1$- and $\psi_2$-norms) in the 
spaces of random variables. Given a convex nondecreasing function $\psi:{\mathbb R}_+\mapsto {\mathbb R}_+$ with $\psi(0)=0$ and a random variable $\eta$ on a probability space 
$(\Omega,{\mathcal A}, {\mathbb P}),$ define its $\psi$-norm as 
$$
\|\eta\|_{\psi}:=\inf \biggl\{C>0: {\mathbb E}\psi\biggl(\frac{|\eta|}{C}\biggr)\leq 1\biggr\}.
$$  
For $\psi (u):= u^p,u>0,$ $p\geq 1,$ the $\psi$-norm coincides with the $L_p({\mathbb P})$-norm. 
Consider also $\psi_2(u):= e^{u^2}-1, u\geq 0$ and $\psi_1(u)=e^u-1, u\geq 0.$
Then $\|\eta\|_{\psi_2}<+\infty$ means that $\eta$ has subgaussian tails and $\|\eta\|_{\psi_1}<+\infty$ means that $\eta$ has subexponential tails.
Some well known inequalities for $\psi_1$ random variables will be used in 
what follows. For instance, for arbitrary random variables $\xi_k, k=1,\dots, N, N\geq 2$
with $\|\xi_k\|_{\psi_1}<+\infty,$
$$
{\mathbb E}\max_{1\leq k\leq N}|\xi_k|\lesssim \max_{1\leq k\leq N}\|\xi_k\|_{\psi_1}\log N.
$$
If $\xi, \xi_1,\dots, \xi_n$ are i.i.d. centered random variables with $\|\xi\|_{\psi_1}<+\infty,$ then the sum $\xi_1+\dots+\xi_n$ satisfies the 
following version of Bernstein's inequality: for all $t\geq 0$ with probability 
at least $1-e^{-t}$
$$
\biggl|\frac{\xi_1+\dots+\xi_n}{n}\biggr|\lesssim \|\xi\|_{\psi_1}\biggl(\sqrt{\frac{t}{n}}\bigvee \frac{t}{n}\biggr). 
$$

Our goal is to obtain moment bounds and concentration inequalities for the operator norm $\|\hat \Sigma-\Sigma\|.$ It turns out that both the size of the expectation of random variable $\|\hat \Sigma-\Sigma\|$ and its concentration around its mean can be characterized 
in terms of the operator norm $\|\Sigma\|$ and another parameter defined below.

\begin{definition}
Assuming that $X$ is a centered Gaussian random variable in $E$ with covariance 
operator $\Sigma,$ define
$$
{\bf r}(\Sigma):=\frac{\Bigl({\mathbb E}\|X\|\Bigr)^2}{\|\Sigma\|}.
$$
\end{definition}

Note that, for a Gaussian vector $X,$ ${\mathbb E}^{1/2}\|X\|^2\asymp {\mathbb E}\|X\|$ implying that 
$$
{\bf r}(\Sigma)\leq \frac{{\mathbb E}\|X\|^2}{\|\Sigma\|}=:\tilde {\bf r}(\Sigma)\lesssim {\bf r}(\Sigma).
$$ 
In the case when $E$ is a Hilbert space, ${\mathbb E}\|X\|^2={\rm tr}(\Sigma)$ and 
$$
\tilde {\bf r}(\Sigma)=\frac{{\rm tr}(\Sigma)}{\|\Sigma\|}.
$$ 
The last quantity has been already used in the literature under the name of ``effective rank''(see \cite{Vershynin}). 
Clearly, $\tilde {\bf r}(\Sigma)\leq {\rm rank}(\Sigma).$

The main results of the paper include the following:

\begin{itemize} 
\item under an assumption that $X,X_1,\dots, X_n$ are i.i.d. centered 
Gaussian random variables in $E$ with covariance operator $\Sigma,$ it 
will be shown that 
\begin{equation}
\label{main_bd_1}
{\mathbb E}\|\hat \Sigma-\Sigma\|\asymp 
\|\Sigma\|\biggl(\sqrt{\frac{{\bf r}(\Sigma)}{n}}\bigvee \frac{{\bf r}(\Sigma)}{n}\biggr).
\end{equation}
\item Moreover, under an additional assumption that ${\bf r}(\Sigma)\lesssim n,$ the following concentration 
inequality holds for some constant $C>0$ and for all $t\geq 1$ with probability at least  
$1-e^{-t}:$
\begin{equation}
\label{main_bd_2}
\Bigl|\|\hat\Sigma - \Sigma\|-{\mathbb E}\|\hat\Sigma - \Sigma\|\Bigr| 
\leq C\|\Sigma\|\biggl(\sqrt{\frac{t}{n}}\bigvee \frac{t}{n}\biggr).
\end{equation}
Under an assumption that ${\bf r}(\Sigma)\gtrsim n,$ the concentration inequality 
becomes 
\begin{equation}
\label{main_bd_2''}
\Bigl|\|\hat\Sigma - \Sigma\|-{\mathbb E}\|\hat\Sigma - \Sigma\|\Bigr| 
\leq C\|\Sigma\|\biggl(\sqrt{\frac{{\mathbf r}(\Sigma)}{n}}\sqrt{\frac{t}{n}}\bigvee \frac{t}{n}\biggr) 
\end{equation}
and it holds with the same probability.
\end{itemize}

\section{Main results}\label{Sec:Prelim}



The problem of bounding the operator norm $\|\hat \Sigma-\Sigma\|$ has been intensively 
studied, especially, in the finite-dimensional case (see \cite{Vershynin} and references
therein). The focus has been on understanding 
of dependence of this norm on the dimension of the space and on the sample size $n$ (that 
could be simultaneously large) as well as on the tails of linear forms $\langle X,u\rangle, u\in E$ and of the norm $\|X\|$ of random variable $X.$ Many results that hold for Gaussian 
random variables are also true in a slightly more general subgaussian case.

\begin{definition}
A centered random variable $X$ in $E$ will be called {\it subgaussian} iff, for all $u\in E^{\ast},$ 
$$
\|\langle X,u\rangle\|_{\psi_2}\lesssim \|\langle X,u\rangle\|_{L_2({\mathbb P})}.
$$ 
\end{definition}

We will also need the following definition.

\begin{definition}
A weakly square integrable centered random variable $X$ in $E$ with covariance operator $\Sigma$ is called {\it pregaussian} iff there exists a centered Gaussian random variable $Y$
in $E$ with the same covariance operator $\Sigma.$  
\end{definition}

Suppose now that $E={\mathbb R}^d$ for some $d\geq 1.$ 
It will be viewed as a standard Euclidean space. 
Then, the following result is well known (it is a slight modification 
of Theorem 5.39 in Vershynin \cite{Vershynin} stated there for isotropic subgaussian random 
variables, that is, when $\Sigma$ is the identity operator).  

\begin{theorem}
\label{classical}
There exists an absolute constant $C>0$ such that, for all $t\geq 1,$
with probability at least $1-e^{-t}$ 
$$
\|\hat \Sigma - \Sigma\| \leq C \|\Sigma\|\left(\sqrt{\frac{d}{n}}\bigvee \frac{d}{n}\bigvee \sqrt{\frac{t}{n}}\bigvee \frac{t}{n} \right).
$$ 
\end{theorem}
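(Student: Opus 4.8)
The plan is to reduce the problem to the isotropic case $\Sigma=I$ and then to combine a standard discretization of the unit sphere with the Bernstein bound recalled in the introduction. First I would write $X=\Sigma^{1/2}Z$, where $Z$ is a standard Gaussian vector in $\R^d$ (in particular, an isotropic subgaussian vector), so that $X_j=\Sigma^{1/2}Z_j$ and
$$
\hat\Sigma-\Sigma=\Sigma^{1/2}\bigl(\hat\Sigma_Z-I\bigr)\Sigma^{1/2},\qquad \hat\Sigma_Z:=n^{-1}\sum_{j=1}^n Z_jZ_j\trans .
$$
Since $\|\Sigma^{1/2}A\Sigma^{1/2}\|\leq\|\Sigma\|\,\|A\|$ for symmetric $A$, it suffices to prove the claimed bound for $\|\hat\Sigma_Z-I\|$ with $\|\Sigma\|$ replaced by $1$; that is, to show $\|\hat\Sigma_Z-I\|\lesssim\sqrt{d/n}\bigvee d/n\bigvee\sqrt{t/n}\bigvee t/n$ on the relevant event.

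The next step exploits the fact that $\hat\Sigma_Z-I$ is symmetric, so that $\|\hat\Sigma_Z-I\|=\sup_{u\in S^{d-1}}|\langle(\hat\Sigma_Z-I)u,u\rangle|$. Fixing $\eps=1/4$ and choosing a minimal $\eps$-net $\mathcal N$ of $S^{d-1}$, the standard approximation lemma for symmetric operators gives $\|\hat\Sigma_Z-I\|\leq 2\max_{u\in\mathcal N}|\langle(\hat\Sigma_Z-I)u,u\rangle|$, with $|\mathcal N|\leq 9^d$. For each fixed $u\in S^{d-1}$,
$$
\langle(\hat\Sigma_Z-I)u,u\rangle=n^{-1}\sum_{j=1}^n\Bigl(\langle Z_j,u\rangle^2-1\Bigr),
$$
and $\langle Z,u\rangle$ is standard Gaussian, so $\xi:=\langle Z,u\rangle^2-1$ is a centered random variable with $\|\xi\|_{\psi_1}\lesssim 1$. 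The version of Bernstein's inequality recalled in the introduction then yields, for each fixed $u$ and each $s\geq 0$, with probability at least $1-e^{-s}$,
$$
\Bigl|n^{-1}\sum_{j=1}^n\bigl(\langle Z_j,u\rangle^2-1\bigr)\Bigr|\lesssim\sqrt{\frac{s}{n}}\bigvee\frac{s}{n}.
$$

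To pass from a fixed $u$ to the maximum over $\mathcal N$ I would apply a union bound with $s=\log|\mathcal N|+t\leq d\log 9+t$. Since $|\mathcal N|\leq 9^d$, the probability that the pointwise bound fails for at least one $u\in\mathcal N$ is at most $|\mathcal N|e^{-s}\leq e^{-t}$; on the complementary event,
$$
\|\hat\Sigma_Z-I\|\lesssim\sqrt{\frac{d+t}{n}}\bigvee\frac{d+t}{n}\lesssim\sqrt{\frac{d}{n}}\bigvee\frac{d}{n}\bigvee\sqrt{\frac{t}{n}}\bigvee\frac{t}{n},
$$
which, after undoing the reduction, gives the assertion. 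The only delicate point is the bookkeeping in this union bound: the net carries an entropy cost $\log|\mathcal N|\asymp d$, and it is precisely this term, fed into the deviation parameter of Bernstein's inequality, that produces the $\sqrt{d/n}$ and $d/n$ contributions. One must check carefully that the subgaussian regime $\sqrt{s/n}$ and the subexponential regime $s/n$ of Bernstein's inequality are both retained, since it is the latter that generates the linear $d/n$ and $t/n$ terms; accounting correctly for both tails of the $\psi_1$ variables $\langle Z_j,u\rangle^2-1$ is what makes the four-term maximum appear.
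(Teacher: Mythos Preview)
Your proposal is correct and follows the same approach the paper sketches: a $1/4$-net of $S^{d-1}$ with cardinality at most $9^d$, Bernstein's inequality for the $\psi_1$ variables $\langle X_j,u\rangle^2-\mathbb{E}\langle X,u\rangle^2$, and a union bound over the net that injects the entropy cost $d\log 9$ into the deviation parameter. The only cosmetic difference is that you first reduce to the isotropic case via $X=\Sigma^{1/2}Z$, whereas the paper's description applies the net argument directly to $\hat\Sigma-\Sigma$ (bounding the $\psi_1$ norm of $\langle X,u\rangle^2$ by a constant times $\|\Sigma\|$); both routes produce the same four-term maximum.
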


The proof of this theorem is based on a simple $\eps$-net argument that allows one to reduce bounding 
the operator norm $\|\hat \Sigma - \Sigma\|$ to bounding the finite maximum 
$$
\max_{u\in M}|\langle (\hat \Sigma -\Sigma) u,u\rangle|=
\max_{u\in M}\biggl|n^{-1}\sum_{j=1}^n \langle X_j,u\rangle^2 -{\mathbb E}\langle X,u\rangle^2\biggr|,
$$
where $M\subset S^{d-1}$ is a $1/4$-net of the unit sphere 
of cardinality ${\rm card}(M)\leq 9^d.$ The bounding of the finite maximum 
is based on a version of Bernstein inequality for the sum of independent $\psi_1$ random variables $\langle X_j,u\rangle^2$ combined with the union bound 
(see the proof of Theorem 5.39 in \cite{Vershynin} and the comments after this theorem).

In the isotropic case (that is, when $\Sigma=I_d$), the bound of Theorem \ref{classical} 
is sharp and it can be viewed as a non-asymptotic version of the well known Bai-Yin 
theorem from the asymptotic theory of random matrices. In the cases when the distribution 
of $X$ is far from being isotropic, this bound is no longer sharp and it clearly can not 
be used in the infinite-dimensional case.
If the covariance operator $\Sigma$ is of a small rank, it is natural to expect that 
the rank of $\Sigma$ rather than the dimension of the space $E$ should be involved 
in the bound. It turns out that one can obtain bounds on the operator norm 
$\|\hat \Sigma-\Sigma\|$ in terms of the ``effective rank'' 
$\tilde {\bf r}(\Sigma)=\frac{{\rm tr}(\Sigma)}{\|\Sigma\|}$ of the 
covariance operator $\Sigma$ (that is always dominated by its actual rank). 
This could be done, for instance, using noncommutative Bernstein type inequalities 
that go back to Ahlswede and Winter \cite{Ahlswede} (see also Tropp \cite{Tropp}, Koltchinskii \cite{Koltchinskii}).  
For instance, Lounici \cite{Lounici} showed that with some constant $C>0$ and with probability at least $1-e^{-t}$ 
$$
\|\hat \Sigma - \Sigma\| \leq C\|\Sigma\| 
\max\left\{ \sqrt{\frac{{\tilde {\bf r}}(\Sigma)\log d + t}{n}} , \frac{({\tilde {\bf r}}(\Sigma)\log d + t)\log n}{n} \right\}.
$$

Another approach to bounding the operator norm $\|\hat \Sigma-\Sigma\|$ was developed 
by Rudelson \cite{Rudelson} and it is based on 
a noncommutative Khintchine inequality due to Lust-Picard and Pisier \cite{Pisier}. 
This method can be used not only in subgaussian, but also in ``heavy tailed'' cases
and it leads, for instance, to the following expectation bound (see Vershynin \cite{Vershynin}, Theorem 5.48):
$$
{\mathbb E}\|\hat \Sigma-\Sigma\|\lesssim \max\left\{
\|\Sigma\|^{1/2}{\mathbb E}^{1/2}\max_{1\leq j\leq n}\|X_j\|^2
\sqrt{\frac{\log d}{n}},{\mathbb E}\max_{1\leq j\leq n}\|X_j\|^2\frac{\log d}{n}
\right\}.
$$ 
Note that, in the subgaussian case, 
$$
\Bigl\|\|X\|^2\Bigr\|_{\psi_1} \lessim {\rm tr}(\Sigma),
$$ 
which implies that
$$
{\mathbb E}\max_{1\leq j\leq n}\|X_j\|^2\lessim {\rm tr}(\Sigma) \log n
=\|\Sigma\|\tilde{{\bf r}}(\Sigma)\log n. 
$$
Therefore, in this case we get 
$$
{\mathbb E}\|\hat \Sigma-\Sigma\|\lesssim \|\Sigma\|\max\left\{
\sqrt{\frac{\tilde{{\bf r}}(\Sigma)\log d \log n}{n}},
\frac{\tilde{{\bf r}}(\Sigma)\log d \log n}{n}
\right\}.
$$ 

In each of the above approaches, the bounds are not dimension free (at least, with a straightforward application of noncommutative Bernstein or Khintchine inequalities)
and they could not be directly used in the infinite-dimensional case. We will use below 
a different approach based on recent deep results on generic chaining bounds for 
empirical processes.
The following facts about generic chaining complexities will be needed. 
Let $N_n:=2^{2^n}, n\geq 1$ and $N_0:=1.$ 
Given a metric space $(T,d),$ an increasing sequence $\Delta_n$ of partitions 
of $T$ is called admissible if ${\rm card}(\Delta_n)\leq N_n.$ For $t\in T,$
$\Delta_n(t)$ denotes the unique set of the partition $\Delta_n$ that contains $t.$
For $A\subset T,$ $D(A)$ denotes the diameter of set $A.$
Define  
$$
\gamma_2(T,d) = \inf\sup_{t\in T} \sum_{n=0}^\infty 2^{n/2}D(\Delta_n(t)),
$$
where the infimum is taken over all admissible sequences. 

The following fundamental result is due to Talagrand (see \cite{Talagrand}; it was initially 
stated it terms of majorizing measures rather than generic complexities). 

\begin{theorem}
\label{tal}
Let $X(t), t\in T$ be a centered Gaussian process and suppose that 
$$
d(t,s):= {\mathbb E}^{1/2}(X(t)-X(s))^2, t,s\in T.
$$
Then, there exists an absolute constant $K>0$ such that 
$$
{\mathbb E}\sup_{t\in T}X(t)\geq K^{-1} \gamma_2(T;d).
$$
\end{theorem}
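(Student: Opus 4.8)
The statement is the lower bound half of Talagrand's majorizing measures theorem; it is matched, up to the absolute constant $K$, by the classical chaining upper bound $\E\sup_{t\in T}X(t)\lesssim\gamma_2(T,d)$, but only the lower bound is asserted here and it is by far the harder direction. The plan is to run generic chaining in reverse: instead of estimating $\E\sup X$ from a given admissible sequence, I would use the set functional $F(A):=\E\sup_{t\in A}X(t)$, defined for $A\subseteq T$, to \emph{construct} an admissible sequence whose chaining sum is dominated by $F(T)$. Because $\E\sup_{t\in A}(X(t)-X(t_0))=F(A)$ for any fixed reference point $t_0$, the functional is monotone in $A$ and invariant under recentering, which is exactly what makes a recursive decomposition of $T$ legitimate.

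Two probabilistic inputs drive the construction. The first is Sudakov minoration: if $t_1,\dots,t_N\in T$ are pairwise $\eps$-separated in the metric $d$, then comparing $(X(t_i))_i$ to i.i.d. $N(0,\eps^2/2)$ variables via Slepian's inequality yields $\E\max_{i}X(t_i)\gtrsim\eps\sqrt{\log N}$. The second, more delicate input is Gaussian concentration (the Borell--Tsirelson--Ibragimov--Sudakov inequality), which I would use to upgrade Sudakov minoration into a \emph{growth condition} for $F$: if a set $A$ contains $m$ balls $B_1,\dots,B_m$ of radius $a/4$ whose centers are pairwise at distance at least $a$, then
\[
F\Bigl(\textstyle\bigcup_i B_i\Bigr)\;\gtrsim\; a\sqrt{\log m}\;+\;\min_{1\le i\le m}F(B_i).
\]
The role of concentration is that, the balls being well separated, each $\sup_{t\in B_i}X(t)$ concentrates tightly about its mean at the scale fixed by the radius, so the supremum over the union picks up the full $\sqrt{\log m}$ gain from the separated centers on top of the worst individual sup, with no loss from the fluctuations inside the balls. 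The additive $\min_i F(B_i)$ term, rather than a $\max$, is what later permits the recursion to telescope.

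With the growth condition in hand I would build the admissible partitions $\Delta_n$ recursively. Starting from $\Delta_0=\{T\}$, at each level I refine every current cell $A\in\Delta_n$: I select within $A$ a maximal family of separated points, at a geometrically decreasing sequence of scales $a_n$ and in number matched to $N_{n+1}=2^{2^{n+1}}$ so that $\sqrt{\log(\text{number of points})}\asymp 2^{(n+1)/2}$, peel off the balls around them as new cells, and collect the remainder into a single further cell, keeping $\mathrm{card}(\Delta_{n+1})\le N_{n+1}$. The growth condition then guarantees that each time the diameter term $2^{n/2}D(\Delta_n(t))$ is forced to be of order $2^{n/2}a_n$, the value of $F$ on the ambient cell drops by a comparable amount; summing these telescoping decrements along the branch of cells $\Delta_0(t)\supseteq\Delta_1(t)\supseteq\cdots$ containing a fixed $t$ bounds $\sum_{n\ge0}2^{n/2}D(\Delta_n(t))$ by $C\,F(T)$ uniformly in $t$, whence $\gamma_2(T,d)\le C\,F(T)=C\,\E\sup_{t\in T}X(t)$.

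The main obstacle is precisely this last construction. Matching the $2^{n/2}$ weights in $\gamma_2$ against the $\sqrt{\log m}\asymp 2^{(n+1)/2}$ gain of the growth condition demands a careful choice of the separation scales $a_n$, and the delicate point is the bookkeeping that shows the $F$-decrements along each branch are genuinely summable to $F(T)$ rather than merely controlled level by level — in particular, handling branches that pass repeatedly through the ``remainder'' cells, where the diameter can stay large across several levels before the growth condition forces a drop. This analytic--combinatorial partitioning scheme is the heart of Talagrand's theorem; the Sudakov and concentration inputs, by comparison, are essentially standard.
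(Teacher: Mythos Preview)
The paper does not prove this theorem at all: it is stated as a background result and attributed to Talagrand with a citation to his book. There is therefore no ``paper's own proof'' to compare your proposal against.

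That said, your outline is a faithful sketch of the standard proof of the majorizing measures lower bound as presented, for instance, in Talagrand's \emph{The Generic Chaining}: Sudakov minoration via Slepian, the growth condition for the functional $F(A)=\E\sup_{t\in A}X(t)$ obtained from Borell--TIS concentration, and the recursive construction of an admissible sequence whose chaining sum telescopes against the $F$-decrements. You have correctly identified the genuinely hard step (the combinatorial partitioning scheme and the bookkeeping along branches that linger in remainder cells). For the purposes of this paper, though, no proof is expected---the result is used as a black box in the upper-bound argument of Theorem~\ref{th_operator}.
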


In what follows, generic chaining complexities are used in the case when $T={\mathcal F}$ is a function class on a probability space $(S,{\mathcal A},P)$ and $d$ is the metric generated by either $L_2(P)$-norm, or by the $\psi_2$-norm with respect to $P.$ 
We will use the following result due to Mendelson \cite{Mendelson}
(although an earlier, simpler and weaker version, with $\sup_{f\in {\cal F}}\|f\|_{\psi_2}$ 
instead of $\sup_{f\in {\cal F}}\|f\|_{\psi_1},$ that goes back to Klartag and Mendelson 
\cite{Klartag} would suffice for our purposes).  

\begin{theorem}
\label{men}
Let $X,X_1,\dots, X_n$ be i.i.d. random variables in $S$ with common distribution 
$P$ and let ${\cal F}$ be a class of measurable functions on $(S,{\mathcal A})$
such that $f\in {\cal F}$ implies $-f\in {\cal F}$ and ${\mathbb E}f(X)=0.$ Then
$$
{\mathbb E}\sup_{f\in {\mathcal F}} 
\left| \frac{1}{n}\sum_{i=1}^n f^2(X_i)-   \mathbb E f^2(X) \right|
\lesssim \max\left\lbrace\sup_{f\in {\mathcal F}}\|f\|_{\psi_1} 
\frac{\gamma_2({\mathcal F};\psi_2)}{\sqrt{n}},
\frac{\gamma_2^2({\mathcal F};\psi_2)}{n}\right\rbrace.
$$
\end{theorem}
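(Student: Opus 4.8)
The plan is to read the left-hand side as the expected supremum of the \emph{centered empirical process of squares}, $Z_f:=n^{-1}\sum_{i=1}^n\bigl(f^2(X_i)-\mathbb{E}f^2(X)\bigr)$, $f\in\mathcal{F}$, and to bound $\mathbb{E}\sup_{f}|Z_f|$ by a generic chaining argument that exploits the \emph{mixed} sub-Gaussian/sub-exponential behaviour of its increments. The algebraic engine is the factorization $f^2-g^2=(f-g)(f+g)$: it converts an increment of the squared process into a product of two sub-Gaussian functions, hence into a sub-exponential ($\psi_1$) object to which the Bernstein inequality recorded in the Introduction applies.

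First I would establish the increment bound. For fixed $f,g\in\mathcal{F}$ the difference $Z_f-Z_g$ is an average of i.i.d.\ centered variables $\phi_i=(f^2-g^2)(X_i)-\mathbb{E}(f^2-g^2)(X)$, and since the product of two $\psi_2$ functions is $\psi_1$ we have $\|\phi_i\|_{\psi_1}\lesssim\|f-g\|_{\psi_2}\,\|f+g\|_{\psi_2}=:\rho(f,g)$. Bernstein's inequality for sums of independent $\psi_1$ variables then yields, for every $u\ge 0$, with probability at least $1-e^{-u}$,
$$|Z_f-Z_g|\lesssim\rho(f,g)\left(\sqrt{\frac{u}{n}}\bigvee\frac{u}{n}\right),$$
a two-regime tail governed by the sub-Gaussian metric $d_2:=\rho/\sqrt{n}$ and the sub-exponential metric $d_1:=\rho/n$.

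Next I would invoke the generic chaining bound for processes with mixed tails (the two-metric companion of Theorem \ref{tal}), which, writing $\gamma_1(T,d):=\inf\sup_{t}\sum_{n\ge0}2^{n}D(\Delta_n(t))$ for the functional with weights $2^n$, gives
$$\mathbb{E}\sup_{f\in\mathcal{F}}|Z_f|\lesssim\gamma_2(\mathcal{F},d_2)+\gamma_1(\mathcal{F},d_1)=\frac{1}{\sqrt{n}}\,\gamma_2(\mathcal{F},\rho)+\frac{1}{n}\,\gamma_1(\mathcal{F},\rho).$$
It then remains to pass from the product metric $\rho$ to the intrinsic $\psi_2$-metric on $\mathcal{F}$. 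For the sub-Gaussian term I would bound the multiplier by $\|f+g\|_{\psi_2}\le 2\sup_{h}\|h\|_{\psi_2}=:2R$, linearizing $\rho$ and producing $\tfrac{2R}{\sqrt n}\gamma_2(\mathcal{F};\psi_2)$, the first asserted term (with the radius measured in $\psi_2$). For the sub-exponential term I would split $\|f+g\|_{\psi_2}\le\|f-g\|_{\psi_2}+2\|g\|_{\psi_2}$ into a quadratic piece $\|f-g\|_{\psi_2}^2$ and a linear piece $2\|g\|_{\psi_2}\|f-g\|_{\psi_2}$; the quadratic piece feeds $\gamma_1$ and, via the elementary relation $\gamma_1(\mathcal{F},\|\cdot\|_{\psi_2}^2)\le4\gamma_2(\mathcal{F};\psi_2)^2$ (apply the near-optimal admissible sequence for $\gamma_2$ and use $\sum_n a_n^2\le(\sum_n a_n)^2$ with $a_n=2^{n/2}D(\Delta_n(\cdot))$), contributes the radius-free term $\gamma_2^2(\mathcal{F};\psi_2)/n$.

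The hard part will be the last reduction, at two distinct points. First, the residual \emph{linear} piece $\tfrac{R}{n}\gamma_1(\mathcal{F};\psi_2)$ left over from the sub-exponential term is not controllable by a global diameter factor, since $\gamma_1$ of a linear metric can genuinely exceed the $\gamma_2^2$ scale; one must instead localize the multiplier $\|f+g\|_{\psi_2}$ along the chain so that the sub-exponential increment is truly quadratic in $\|f-g\|_{\psi_2}$ and the radius disappears. Second, and this is the improvement of Mendelson over Klartag--Mendelson, the crude estimate $\|f+g\|_{\psi_2}\le2R$ only yields $\sup_h\|h\|_{\psi_2}$ in the first term, whereas the theorem claims the smaller $\sup_h\|h\|_{\psi_1}$; upgrading the radius from $\psi_2$ to $\psi_1$ requires a scale-by-scale truncation/peeling argument that isolates the rare large values of $f+g$ (where the sub-exponential regime dominates) from the bulk, rather than a single global diameter. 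Making this decoupling compatible with the admissible partitions underlying $\gamma_2(\mathcal{F};\psi_2)$ is the technical heart of the theorem.
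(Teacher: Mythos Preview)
The paper does not prove this theorem at all: it is stated as a known result ``due to Mendelson \cite{Mendelson}'' (with the remark that the earlier, weaker Klartag--Mendelson version with $\sup_f\|f\|_{\psi_2}$ in place of $\sup_f\|f\|_{\psi_1}$ would already suffice) and is used as a black box in the proof of the upper bound of Theorem~\ref{th_operator}. There is therefore no ``paper's own proof'' to compare your proposal against.

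That said, your sketch is a faithful outline of how the result is actually established in the literature. The factorization $f^2-g^2=(f-g)(f+g)$ together with Bernstein-type increment bounds and two-metric generic chaining is precisely the Klartag--Mendelson mechanism, and you have correctly isolated the two genuine difficulties: (i) the linear residual $\tfrac{R}{n}\gamma_1(\mathcal{F};\psi_2)$ cannot be handled globally and requires localizing the multiplier $\|f+g\|_{\psi_2}$ along the chain so that the sub-exponential increment becomes truly quadratic; and (ii) the upgrade of the radius from $\psi_2$ to $\psi_1$ is exactly Mendelson's improvement, which does require a more delicate scale-dependent argument. Your proposal stops short of carrying out either of these steps, so as written it is a correct diagnosis rather than a complete proof; but since the paper itself treats the theorem as an imported tool, a full proof is not expected here.
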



Assume again that $E$ is an arbitrary separable Banach space. 
The next result provides a characterization of the size of ${\mathbb E}\|\hat \Sigma-\Sigma\|$
in terms of the parameters $\|\Sigma\|$ and ${\bf r}(\Sigma)$ for Gaussian random variable $X$
(the upper bound also holds in the case when $X$ is both subgaussian and pregaussian). 

\begin{theorem}
\label{th_operator}
Let $X,X_1,\ldots,X_n$ be i.i.d. weakly square integrable centered random vectors in $E$ with covariance operator $\Sigma.$ If $X$ is subgaussian and pregaussian, then 
\begin{align}
\nonumber 
\E\|\hat\Sigma - \Sigma\| \lesssim 
\|\Sigma\|\max\left\lbrace \sqrt{\frac{\mathbf{r}(\Sigma)}{n}}, \frac{\mathbf{r}(\Sigma)}{n}\right\rbrace.
\end{align}
Moreover, if $X$ is Gaussian, then 
\begin{align}
\nonumber
\|\Sigma\|\max\left\lbrace \sqrt{\frac{\mathbf{r}(\Sigma)}{n}}, \frac{\mathbf{r}(\Sigma)}{n}\right\rbrace \lesssim 
\E\|\hat\Sigma - \Sigma\| \lesssim 
\|\Sigma\|\max\left\lbrace \sqrt{\frac{\mathbf{r}(\Sigma)}{n}}, \frac{\mathbf{r}(\Sigma)}{n}\right\rbrace.
\end{align}
\end{theorem}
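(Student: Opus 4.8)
\emph{Upper bound.} The plan is to pass from the operator norm to a quadratic empirical process and then invoke Theorem~\ref{men}. Since $\hat\Sigma-\Sigma$ is symmetric, polarization gives $\|\hat\Sigma-\Sigma\|\le 2\sup_{\|u\|\le 1}|\langle(\hat\Sigma-\Sigma)u,u\rangle|$, and $\langle(\hat\Sigma-\Sigma)u,u\rangle=n^{-1}\sum_{j}\langle X_j,u\rangle^2-\E\langle X,u\rangle^2$. Thus, taking $\mathcal F=\{f_u=\langle\cdot,u\rangle:\ \|u\|\le 1\}$, which is symmetric and centered, it suffices to bound $\E\sup_{f\in\mathcal F}|n^{-1}\sum_j f^2(X_j)-\E f^2(X)|$, which is exactly the left-hand side of Theorem~\ref{men}. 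It then remains to estimate the two quantities entering that bound, namely $\sup_{f\in\mathcal F}\|f\|_{\psi_1}$ and $\gamma_2(\mathcal F;\psi_2)$.

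\emph{Estimating the inputs.} For the first, subgaussianity gives $\|f_u\|_{\psi_2}\lesssim\|f_u\|_{L_2(P)}=\langle\Sigma u,u\rangle^{1/2}\le\|\Sigma\|^{1/2}$ for $\|u\|\le 1$, whence $\sup_f\|f\|_{\psi_1}\lesssim\|\Sigma\|^{1/2}$. For the second, subgaussianity shows the $\psi_2$-metric on $\mathcal F$ is dominated up to a constant by the $L_2(P)$-metric, so by monotonicity of $\gamma_2$ in the metric $\gamma_2(\mathcal F;\psi_2)\lesssim\gamma_2(\mathcal F;L_2)$. The pregaussian hypothesis now supplies a Gaussian $Y$ with covariance $\Sigma$, and the $L_2(P)$-metric on $\mathcal F$ coincides with the intrinsic metric of the Gaussian process $u\mapsto\langle Y,u\rangle$; hence Theorem~\ref{tal} yields $\gamma_2(\mathcal F;L_2)\lesssim\E\sup_{\|u\|\le 1}\langle Y,u\rangle=\E\|Y\|=\|\Sigma\|^{1/2}\sqrt{\mathbf r(\Sigma)}$. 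Substituting both estimates into Theorem~\ref{men} produces $\|\Sigma\|\max\{\sqrt{\mathbf r(\Sigma)/n},\ \mathbf r(\Sigma)/n\}$, proving the upper bound.

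\emph{Lower bound.} In the Gaussian case I would establish the two terms separately. The term $\|\Sigma\|\mathbf r(\Sigma)/n$ is elementary: since the summands are nonnegative, $\|\hat\Sigma\|\ge n^{-1}\sup_{\|u\|\le 1}\langle X_1,u\rangle^2=n^{-1}\|X_1\|^2$, so by Jensen $\E\|\hat\Sigma\|\ge n^{-1}(\E\|X\|)^2=\|\Sigma\|\mathbf r(\Sigma)/n$ and $\E\|\hat\Sigma-\Sigma\|\ge\E\|\hat\Sigma\|-\|\Sigma\|\gtrsim\|\Sigma\|\mathbf r(\Sigma)/n$ once $\mathbf r(\Sigma)\gtrsim n$. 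The term $\|\Sigma\|\sqrt{\mathbf r(\Sigma)/n}$ is the crux, and the plan is to Gaussianize \emph{without} logarithmic loss by exploiting the Gaussianity of $X$ itself. Introducing an independent copy $\hat\Sigma'$, symmetrization gives $\E\|\hat\Sigma-\Sigma\|\gtrsim\E\|\hat\Sigma-\hat\Sigma'\|$, and the rotation $U_j=(X_j+X_j')/\sqrt2$, $V_j=(X_j-X_j')/\sqrt2$ (again i.i.d.\ $N(0,\Sigma)$ and mutually independent) turns this into the bilinear form $\hat\Sigma-\hat\Sigma'=n^{-1}\sum_j(U_j\otimes V_j+V_j\otimes U_j)$. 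After a decoupling inequality I may treat this as $n^{-1}\sum_j U_j\otimes V_j'$ with $V_j'$ an independent copy, and condition on $(V_j')$: as a function of the Gaussian $(U_j)$ this is a centered Gaussian process in the test pair $(u,w)$. Evaluating its supremum by fixing $u$ and maximizing over $w$ reduces it to the expected norm of a single Gaussian vector, giving $\gtrsim n^{-1}\bigl(\sup_{\|w\|\le1}\sum_j\langle V_j',w\rangle^2\bigr)^{1/2}\E\|Y\|=n^{-1/2}\|\hat\Sigma_{V'}\|^{1/2}\E\|Y\|$ where $\hat\Sigma_{V'}:=n^{-1}\sum_j V_j'\otimes V_j'$; since $\E\|\hat\Sigma_{V'}\|^{1/2}\gtrsim\|\Sigma\|^{1/2}$, this is $\gtrsim\|\Sigma\|\sqrt{\mathbf r(\Sigma)/n}$. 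Combining the two terms yields the matching lower bound.

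\emph{Main obstacle.} The delicate point is the lower bound on the main term. A naive route through $\E\|\hat\Sigma\|-\|\Sigma\|$ would require a Gaussian comparison with a \emph{sharp} leading constant, so that the $\E\|Y\|$ contribution is not absorbed into the $\|\Sigma\|$ subtraction; neither Theorem~\ref{tal} (which loses the absolute constant $K$) nor Chevet-type inequalities supply this, and the two marginal lower bounds only give a maximum rather than the needed sum. The symmetrization--rotation--decoupling device circumvents the difficulty by exhibiting the Gaussian width factor $\E\|Y\|$ directly inside a conditionally Gaussian process, so that only absolute constants are lost; verifying the decoupling step and the conditional Gaussian computation is where the real work lies.
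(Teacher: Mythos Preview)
Your upper bound is exactly the paper's argument: reduce to a quadratic empirical process by polarization, apply Mendelson's Theorem~\ref{men}, and bound the two inputs via subgaussianity and Talagrand's Theorem~\ref{tal} applied to the pregaussian copy $Y$. The $\mathbf r(\Sigma)/n$ part of the lower bound (via $\|\hat\Sigma\|\ge n^{-1}\|X_1\|^2$) is also the same as the paper's.

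For the main lower bound $\|\Sigma\|\sqrt{\mathbf r(\Sigma)/n}$ your route differs from the paper's, and the decoupling step is a genuine gap. After the rotation you have $\hat\Sigma-\hat\Sigma'=n^{-1}\sum_j(U_j\otimes V_j+V_j\otimes U_j)$ with $U_j,V_j$ \emph{already} independent, so no decoupling is needed to make $V$ independent of $U$; what you actually need is to drop the second summand and pass to $A=n^{-1}\sum_jU_j\otimes V_j$ alone. There is no pointwise inequality of the form $\|A+A^{\ast}\|\gtrsim\|A\|$ (take $A$ skew), and standard decoupling inequalities for multilinear forms do not supply this ``desymmetrization'' lower bound either. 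You do not say which inequality you would invoke, and you concede this is where the real work lies.

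The paper avoids symmetrization and decoupling entirely. It fixes $u\in E^{\ast}$ with $\|u\|\le1$, writes the Gaussian orthogonal decomposition $X=\langle X,u\rangle\,\Sigma u/\langle\Sigma u,u\rangle+X'$ with $X'$ independent of $\langle X,u\rangle$, and observes that conditionally on $(\langle X_j,u\rangle)_j$ the sum $n^{-1}\sum_j\langle X_j,u\rangle X_j'$ has the same law as $(n^{-1}\sum_j\langle X_j,u\rangle^2)^{1/2}\,n^{-1/2}X'$. A single Jensen step (now conditioning on $X'$) gives
\[
\E\|\hat\Sigma-\Sigma\|\ \ge\ c\,\frac{\langle\Sigma u,u\rangle^{1/2}\,\E\|X\|-\sqrt{2/\pi}\,\|\Sigma u\|}{\sqrt n},
\]
and taking the supremum over $u$ yields $c\,\|\Sigma\|\bigl(\sqrt{\mathbf r(\Sigma)}-\sqrt{2/\pi}\bigr)/\sqrt n$. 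The subtracted constant is absorbed by the separate elementary bound $\E\|\hat\Sigma-\Sigma\|\ge\sup_u\langle\Sigma u,u\rangle\,\E|n^{-1}\sum_jZ_j^2-1|\gtrsim\|\Sigma\|/\sqrt n$. This is short and uses only one orthogonal projection plus Jensen. If you try to repair your argument without the decoupling shortcut---fixing $w$, then decomposing $U_j$ and $V_j$ orthogonally to $w$---you end up performing exactly this computation inside the rotated variables, so the symmetrization/rotation adds nothing.
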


\begin{proof}
The proof of the upper bound relies 
on the generic chaining bound of Theorem \ref{men}, while the proof of the lower bound is rather elementary.

{\it Upper bound}. We have
\begin{align*}
&
{\mathbb E}\|\hat \Sigma - \Sigma\| =
{\mathbb E}\sup_{\|u\|\leq 1, \|v\|\leq 1}\langle (\hat \Sigma-\Sigma)u,v\rangle
\\
&
=
{\mathbb E}\sup_{\|u\|\leq 1, \|v\|\leq 1}
\left(
\left\langle (\hat \Sigma-\Sigma)\frac{u+v}{2},\frac{u+v}{2}\right\rangle
-
\left\langle (\hat \Sigma-\Sigma)\frac{u-v}{2},\frac{u-v}{2}\right\rangle
\right)
\\
&
\leq 2\sup_{\|u\|\leq 1}\Bigl|\langle (\hat \Sigma-\Sigma)u,u\rangle\Bigr|
= 2{\mathbb E}\sup_{\|u\|\leq 1} \left| \frac{1}{n}\sum_{i=1}^n \left\langle  X_i,u\right\rangle^2 -   
\left\langle  \Sigma u, u\right\rangle \right|
\\
&
= 
2{\mathbb E}\sup_{f\in {\mathcal F}} 
\left| \frac{1}{n}\sum_{i=1}^n f^2(X_i)-   \mathbb E f^2(X) \right|,
\end{align*}
where ${\mathcal F}:=\Bigl\{\langle \cdot,u\rangle: u\in U_{E^{\ast}}\Bigr\},$
$U_{E^{\ast}}:=\{u\in E^{\ast}:\|u\|\leq 1\}$ 
and $P$ is the distribution of random variable $X.$

Since $X$ is subgaussian, the $\psi_1$- and $\psi_2$-norms of linear functionals 
$\langle X,u\rangle$ are both equivalent to the $L_2$-norm. This implies 
that 
$$
\sup_{f\in {\mathcal F}}\|f\|_{\psi_1}\lesssim \sup_{u\in U_{E^{\ast}}}{\mathbb E}^{1/2}\langle
X,u\rangle^2 \leq \|\Sigma\|^{1/2}.
$$
Also, since $X$ is pregaussian, there exists a centered Gaussian random variable $Y$
in $E$ with the same covariance $\Sigma.$ This means that 
$$
d_{Y}(u,v)=\|\langle \cdot,u\rangle-\langle \cdot,v\rangle\|_{L_2(P)}, u,v\in U_{E^{\ast}}.
$$ 
Using Talagrand's Theorem \ref{tal}, we easily get that
$$
\gamma_2({\mathcal F},\psi_2) \lesssim 
\gamma_2({\mathcal F},L_2)=\gamma_2(U_{E^{\ast}};d_Y)\lesssim 
\mathbb E \sup_{u\in U_{E^{\ast}}} 
\langle Y, u\rangle \leq {\mathbb E}\|Y\|.
$$
Therefore, it follows that 
$$
{\mathbb E}\|\hat \Sigma - \Sigma\|  
\lesssim 
\max\left\lbrace \|\Sigma\|^{1/2}\frac{{\mathbb E}\|Y\|}{\sqrt{n}}, 
\frac{({\mathbb E}\|Y\|)^2}{n}\right\rbrace
\lesssim 
\|\Sigma\|\max\left\lbrace \sqrt{\frac{\mathbf{r}(\Sigma)}{n}}, \frac{\mathbf{r}(\Sigma)}{n}\right\rbrace,
$$
which proves the upper bound. 

{\it Lower Bound.} To prove the lower bound, note that 
\begin{align}
\label{odin}
&
\nonumber
{\mathbb E}\|\hat \Sigma-\Sigma\|=
{\mathbb E}\sup_{\|u\|\leq 1}\biggl\|n^{-1}\sum_{j=1}^n 
\langle X_j,u\rangle X_j-{\mathbb E}\langle X,u\rangle X\biggr\|
\\
&
\geq 
\sup_{\|u\|\leq 1}{\mathbb E}\biggl\|n^{-1}\sum_{j=1}^n 
\langle X_j,u\rangle X_j-{\mathbb E}\langle X,u\rangle X\biggr\|.
\end{align}
For a fixed $u\in E^{\ast}$ with $\|u\|\leq 1$ and $\langle \Sigma u,u\rangle>0,$
denote 
$$
X' := X- \langle X,u\rangle \frac{\Sigma u}{\langle \Sigma u,u\rangle}.
$$
By a straightforward computation, for all $v\in E^{\ast},$ the random variables 
$\langle X,u\rangle$ and $\langle X',v\rangle$ are uncorrelated. Since they are jointly
Gaussian, it follows that $\langle X,u\rangle$ and $X'$ are independent. 
Define 
$$
X_j' := X_j- \langle X_j,u\rangle \frac{\Sigma u}{\langle \Sigma u,u\rangle}, j=1,\dots, n.
$$
Then $\{X_j':j=1,\dots,n\}$ and $\{\langle X_j,u\rangle:j=1,\dots, n\}$ are also 
independent. We easily get  
\begin{align}
\label{dva}
&
\nonumber
{\mathbb E}\biggl\|n^{-1}\sum_{j=1}^n 
\langle X_j,u\rangle X_j-{\mathbb E}\langle X,u\rangle X\biggr\|
=
\\
&
{\mathbb E}\biggl\|n^{-1}\sum_{j=1}^n 
(\langle X_j,u\rangle^2 - 
{\mathbb E}\langle X,u\rangle^2)\frac{\Sigma u}{\langle \Sigma u,u\rangle}+
n^{-1}\sum_{j=1}^n \langle X_j,u\rangle X_j'\biggr\|,
\end{align}
where we used the fact that 
$$
{\mathbb E}\langle X,u\rangle X={\mathbb E}\langle X,u\rangle^2
\frac{\Sigma u}{\langle \Sigma u,u\rangle}+ {\mathbb E}\langle X,u\rangle {\mathbb E}X'
={\mathbb E}\langle X,u\rangle^2
\frac{\Sigma u}{\langle \Sigma u,u\rangle}.
$$
Note that, conditionally on $\langle X_j,u\rangle, j=1,\dots, n,$
the distribution of random variable 
$$n^{-1}\sum_{j=1}^n \langle X_j,u\rangle X_j'$$
is Gaussian and it coincides with the distribution of the random variable 
$$
\biggl(n^{-1}\sum_{j=1}^n \langle X_j,u\rangle^2\biggr)^{1/2}\frac{X'}{\sqrt{n}}.
$$ 
Denote now by ${\mathbb E}_u$ the conditional expectation given $\langle X_j,u\rangle, j=1,\dots, n$ and by ${\mathbb E}'$ the conditional expectation given $X_1',\dots, X_n'.$
Then, we have 
$$
{\mathbb E}\biggl\|n^{-1}\sum_{j=1}^n 
(\langle X_j,u\rangle^2 - 
{\mathbb E}\langle X,u\rangle^2)\frac{\Sigma u}{\langle \Sigma u,u\rangle}+
n^{-1}\sum_{j=1}^n \langle X_j,u\rangle X_j'\biggr\|
$$
$$
={\mathbb E}{\mathbb E}_u\biggl\|n^{-1}\sum_{j=1}^n 
(\langle X_j,u\rangle^2 - 
{\mathbb E}\langle X,u\rangle^2)\frac{\Sigma u}{\langle \Sigma u,u\rangle}+
n^{-1}\sum_{j=1}^n \langle X_j,u\rangle X_j'\biggr\|
$$
$$
=
{\mathbb E}{\mathbb E}_u\biggl\|n^{-1}\sum_{j=1}^n 
(\langle X_j,u\rangle^2 - 
{\mathbb E}\langle X,u\rangle^2)\frac{\Sigma u}{\langle \Sigma u,u\rangle}+
\biggl(n^{-1}\sum_{j=1}^n \langle X_j,u\rangle^2\biggr)^{1/2}\frac{X'}{\sqrt{n}}
\biggr\|
$$
$$
=
{\mathbb E}\biggl\|n^{-1}\sum_{j=1}^n 
(\langle X_j,u\rangle^2 - 
{\mathbb E}\langle X,u\rangle^2)\frac{\Sigma u}{\langle \Sigma u,u\rangle}+
\biggl(n^{-1}\sum_{j=1}^n \langle X_j,u\rangle^2\biggr)^{1/2}\frac{X'}{\sqrt{n}}
\biggr\|.
$$
Also 
$$
{\mathbb E}\biggl\|n^{-1}\sum_{j=1}^n 
(\langle X_j,u\rangle^2 - 
{\mathbb E}\langle X,u\rangle^2)\frac{\Sigma u}{\langle \Sigma u,u\rangle}+
\biggl(n^{-1}\sum_{j=1}^n \langle X_j,u\rangle^2\biggr)^{1/2}\frac{X'}{\sqrt{n}}
\biggr\|
$$
$$
={\mathbb E}{\mathbb E}'\biggl\|n^{-1}\sum_{j=1}^n 
(\langle X_j,u\rangle^2 - 
{\mathbb E}\langle X,u\rangle^2)\frac{\Sigma u}{\langle \Sigma u,u\rangle}+
\biggl(n^{-1}\sum_{j=1}^n \langle X_j,u\rangle^2\biggr)^{1/2}\frac{X'}{\sqrt{n}}
\biggr\|
$$
$$
\geq {\mathbb E}\biggl\|{\mathbb E}'n^{-1}\sum_{j=1}^n 
(\langle X_j,u\rangle^2 - 
{\mathbb E}\langle X,u\rangle^2)\frac{\Sigma u}{\langle \Sigma u,u\rangle}+
{\mathbb E}'\biggl(n^{-1}\sum_{j=1}^n \langle X_j,u\rangle^2\biggr)^{1/2}\frac{X'}{\sqrt{n}}
\biggr\|
$$
$$
=
{\mathbb E}\biggl(n^{-1}\sum_{j=1}^n \langle X_j,u\rangle^2\biggr)^{1/2}
\frac{{\mathbb E}\|X'\|}{\sqrt{n}}.
$$
Note that 
$${\mathbb E}|\langle X,u\rangle|=\sqrt{\frac{2}{\pi}}\langle \Sigma u,u\rangle^{1/2}.$$
Therefore, 
$$
{\mathbb E}\|X'\|\geq {\mathbb E}\|X\|- {\mathbb E}|\langle X,u\rangle|\frac{\|\Sigma u\|}
{\langle \Sigma u,u\rangle}=
{\mathbb E}\|X\|-\sqrt{\frac{2}{\pi}}\frac{\|\Sigma u\|}{\langle \Sigma u,u\rangle^{1/2}}
$$
and 
$$
{\mathbb E}\biggl\|n^{-1}\sum_{j=1}^n 
(\langle X_j,u\rangle^2 - 
{\mathbb E}\langle X,u\rangle^2)\frac{\Sigma u}{\langle \Sigma u,u\rangle}+
\biggl(n^{-1}\sum_{j=1}^n \langle X_j,u^2\rangle\biggr)^{1/2}\frac{X'}{\sqrt{n}}
\biggr\|
$$
$$
\geq 
\langle \Sigma u,u\rangle^{1/2} {\mathbb E}\biggl(n^{-1}\sum_{j=1}^n Z_j^2\biggr)^{1/2}
\frac{{\mathbb E}\|X\|-\sqrt{\frac{2}{\pi}}\frac{\|\Sigma u\|}{\langle \Sigma u,u\rangle^{1/2}}}{\sqrt{n}},
$$
where 
$$Z_j = \frac{\langle X_j,u\rangle}{\langle \Sigma u,u\rangle^{1/2}}, j=1,\dots, n$$
are i.i.d. standard normal random variables. It is easy to check that 
$$
{\mathbb E}\biggl(n^{-1}\sum_{j=1}^n Z_j^2\biggr)^{1/2}\geq c_2
$$
for a positive numerical constant $c_2,$ implying that 
$$
{\mathbb E}\biggl\|n^{-1}\sum_{j=1}^n 
(\langle X_j,u\rangle^2 - 
{\mathbb E}\langle X,u\rangle^2)\frac{\Sigma u}{\langle \Sigma u,u\rangle}+
\biggl(n^{-1}\sum_{j=1}^n \langle X_j,u\rangle^2\biggr)^{1/2}\frac{X'}{\sqrt{n}}
\biggr\|
$$
$$
\geq 
c_2 
\frac{\langle \Sigma u,u\rangle^{1/2}{\mathbb E}\|X\|-\sqrt{\frac{2}{\pi}}\|\Sigma u\|}{\sqrt{n}}.
$$
We now combine this bound with (\ref{odin}) and (\ref{dva}) to get
$$
{\mathbb E}\|\hat \Sigma-\Sigma\|\geq 
c_2 
\sup_{\|u\|\leq 1}\frac{\langle \Sigma u,u\rangle^{1/2}{\mathbb E}\|X\|-
\sqrt{\frac{2}{\pi}} \|\Sigma u\|}{\sqrt{n}}
$$
$$
\geq 
c_2 
\frac{\|\Sigma\|^{1/2}{\mathbb E}\|X\|-\sqrt{\frac{2}{\pi}} \|\Sigma\|}{\sqrt{n}}
\geq c_2 \|\Sigma\|\biggl(\frac{\sqrt{{\bf r}(\Sigma)}-\sqrt{\frac{2}{\pi}}}{\sqrt{n}}\biggr).
$$
We also have the following obvious bound 
$$
{\mathbb E}\|\hat \Sigma-\Sigma\|\geq \sup_{\|u\|\leq 1}
{\mathbb E}\biggl|n^{-1}\sum_{j=1}^n\langle X_j,u\rangle^2 - 
{\mathbb E}\langle X,u\rangle^2\biggr|
$$
$$
= 
\sup_{\|u\|\leq 1}\langle \Sigma u,u\rangle {\mathbb E}\biggl|n^{-1}\sum_{j=1}^n Z_j^2-1\biggr|
\geq c_3 \frac{\|\Sigma\|}{\sqrt{n}}
$$ 
for some numerical constant $c_3>0.$
Thus, we get 
$$
{\mathbb E}\|\hat \Sigma-\Sigma\|\geq 
c_2 \|\Sigma\|\biggl(\frac{\sqrt{{\bf r}(\Sigma)}-\sqrt{\frac{2}{\pi}}}{\sqrt{n}}\biggr)
\bigvee c_3 \frac{\|\Sigma\|}{\sqrt{n}}
$$
$$
\geq 
\frac{1}{2}\biggl(c_2 \|\Sigma\|\biggl(\frac{\sqrt{{\bf r}(\Sigma)}-\sqrt{\frac{2}{\pi}}}{\sqrt{n}}\biggr)
+c_3 \frac{\|\Sigma\|}{\sqrt{n}}\biggr)
\geq \frac{c_2}{2} \|\Sigma\|\frac{\sqrt{{\bf r}(\Sigma)}}{\sqrt{n}},
$$
provided $c_2$ is chosen to be small enough to satisfy $c_2\sqrt{\frac{2}{\pi}}\leq c_3.$  

This completes the proof in the case when ${\bf r}(\Sigma)\leq 2n$ since in this 
case 
$$
\frac{{\bf r}(\Sigma)}{n}\lessim \sqrt{\frac{{\bf r}(\Sigma)}{n}}.
$$
On the other hand, under the assumption that ${\bf r}(\Sigma)\geq 2n,$ 
\begin{align}
\label{very_easy_b3}
&
{\mathbb E}\|\hat \Sigma-\Sigma\|\geq {\mathbb E}\|\hat \Sigma\|-\|\Sigma\|
\geq 
{\mathbb E}\sup_{\|u\|\leq 1}n^{-1}\sum_{j=1}^{n}\langle X_j,u\rangle^2 -
\|\Sigma\| 
\\
&
\nonumber
\geq 
{\mathbb E}\sup_{\|u\|\leq 1}\frac{\langle X_1,u\rangle^2}{n} -
\|\Sigma\| 
\geq 
\frac{{\mathbb E}\|X\|^2}{n}-\|\Sigma\| 
\\
&
\nonumber
\geq
\frac{({\mathbb E}\|X\|)^2}{n}-\|\Sigma\| 
=\|\Sigma\|\biggl(\frac{{\bf r}(\Sigma)}{n}-1\biggr)\geq \frac{1}{2}\|\Sigma\|\frac{{\bf r}(\Sigma)}{n},
\end{align}
which completes the proof in the case when ${\bf r}(\Sigma)\geq 2n.$

\qed
\end{proof}

Our next goal is to prove a concentration inequality for $\|\hat \Sigma-\Sigma\|$ around its 
median or around its expectation.  In what follows, ${\rm Med}(\xi)$ denotes a median of 
a random variable $\xi.$ 

\begin{theorem}
\label{cor_2}
Let $X,X_1,\ldots,X_n$ be i.i.d. centered Gaussian random vectors in $E$ with  covariance $\Sigma$
and let $M$ be either the median, or the expectation of $\|\hat \Sigma-\Sigma\|.$   
Then, there exists a constant 
$C>0$ such that the folllowing holds. 
If ${\bf r}(\Sigma)\leq n,$ then for all $t\geq 1,$ with probability at least $1-e^{-t},$ 
\begin{align}
\label{sha_sha_conc_B'}
&
\Bigl|\|\hat\Sigma - \Sigma\|-M\Bigr| 
\leq C\|\Sigma\|\biggl(\sqrt{\frac{t}{n}}\bigvee \frac{t}{n}\biggr).
\end{align}
On the other hand, if  ${\bf r}(\Sigma)\geq n,$ then with the same probability 
\begin{align}
\label{sha_sha_conc_B''}
&
\Bigl|\|\hat\Sigma - \Sigma\|-M\Bigr| 
\leq 
C\|\Sigma\|\biggl(\sqrt{\frac{{\bf r}(\Sigma)}{n}}\sqrt{\frac{t}{n}}\bigvee \frac{t}{n}\biggr).
\end{align}
\end{theorem}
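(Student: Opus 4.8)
The plan is to view $Z:=\|\hat\Sigma-\Sigma\|$ as a function of the underlying Gaussian randomness and to exploit Gaussian isoperimetry, the only difficulty being that $Z$ is a \emph{quadratic} (hence not globally Lipschitz) function of the sample. First I would record the reduction already used in Theorem \ref{th_operator}: writing $U_{E^\ast}=\{u:\|u\|\le1\}$,
$$
Z=\sup_{u\in U_{E^\ast}}\Bigl|\tfrac1n\textstyle\sum_{j=1}^n\langle X_j,u\rangle^2-\langle\Sigma u,u\rangle\Bigr|.
$$
Since $X$ is centered Gaussian in a separable space it admits an expansion $X=\sum_k\xi_k\phi_k$ with $\xi_k$ i.i.d. $N(0,1)$, $\phi_k\in E$ and $\sum_k\|\phi_k\|^2<\infty$; thus the whole sample, and hence $Z$, is a measurable function $F(G)$ of the standard Gaussian sequence $G=(\xi_{k,j})_{k,j}\in\ell_2$. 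To avoid abstract Wiener space technicalities I would first work with the finite truncations $X^{(N)}=\sum_{k\le N}\xi_k\phi_k$, prove the bound uniformly in $N$ by finite-dimensional Gaussian isoperimetry, and let $N\to\infty$.

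The second step is a \emph{self-bounding} gradient estimate. For fixed $u$ the map $G\mapsto\frac1n\sum_j\langle X_j,u\rangle^2$ is smooth, and $\partial_{\xi_{k,j}}$ of it equals $\frac2n\langle X_j,u\rangle\langle\phi_k,u\rangle$, so its squared Euclidean gradient is $\frac{4}{n^2}\bigl(\sum_j\langle X_j,u\rangle^2\bigr)\bigl(\sum_k\langle\phi_k,u\rangle^2\bigr)=\frac4n\langle\hat\Sigma u,u\rangle\,\langle\Sigma u,u\rangle$, using $\sum_k\langle\phi_k,u\rangle^2=\langle\Sigma u,u\rangle$. Because $F$ is a supremum of such smooth functions (and of their negatives, to handle the absolute value), at a.e. $G$ its gradient is that of an active $u^\ast$, whence
$$
\|\nabla F(G)\|^2\le\frac{4\|\Sigma\|}{n}\,\langle\hat\Sigma u^\ast,u^\ast\rangle\le\frac{4\|\Sigma\|}{n}\bigl(Z+\|\Sigma\|\bigr),
$$
since $\langle\hat\Sigma u^\ast,u^\ast\rangle\le\|\hat\Sigma\|\le\|\hat\Sigma-\Sigma\|+\|\Sigma\|=F+\|\Sigma\|$. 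Thus $\|\nabla F\|\le\sigma\sqrt{F+b}$ with $\sigma^2=4\|\Sigma\|/n$ and $b=\|\Sigma\|$.

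The third step turns this into concentration. Let $m={\rm Med}(F)$ and apply the Gaussian isoperimetric inequality (see \cite{Ledoux}) to $A=\{F\le m\}$, which has probability $\ge1/2$. For $x$ in the $\rho$-enlargement $A_\rho$ pick $y\in A$ with $\|x-y\|\le\rho$ and integrate along the segment: with $\varphi(s)=F(y+s(x-y))$ one has $\frac{d}{ds}\sqrt{\varphi(s)+b}\le\frac{\sigma\rho}2$, so $\sqrt{F(x)+b}\le\sqrt{m+b}+\frac{\sigma\rho}2$, i.e. $F(x)\le m+\sigma\rho\sqrt{m+b}+\frac{\sigma^2\rho^2}4$; the symmetric choice $A=\{F\ge m\}$ gives the matching lower bound. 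Taking $\rho=\sqrt{2t}$, isoperimetry yields, with probability at least $1-e^{-t}$,
$$
|Z-m|\lesssim\sqrt{\tfrac{\|\Sigma\|}{n}\,t\,(m+\|\Sigma\|)}\ \bigvee\ \tfrac{\|\Sigma\|}{n}\,t.
$$
Integrating this tail shows $|\,\E Z-m\,|$ is of the same order, so the two choices of $M$ are interchangeable; and from Theorem \ref{th_operator}, $m\asymp\E Z\asymp\|\Sigma\|(\sqrt{{\bf r}(\Sigma)/n}\vee{\bf r}(\Sigma)/n)$. Finally I would split into the two regimes: when ${\bf r}(\Sigma)\le n$ we have $m+\|\Sigma\|\asymp\|\Sigma\|$, giving the bound $\|\Sigma\|(\sqrt{t/n}\vee t/n)$ as in (\ref{sha_sha_conc_B'}); when ${\bf r}(\Sigma)\ge n$ we have $m+\|\Sigma\|\asymp\|\Sigma\|{\bf r}(\Sigma)/n$, giving $\|\Sigma\|(\sqrt{{\bf r}(\Sigma)/n}\sqrt{t/n}\vee t/n)$ as in (\ref{sha_sha_conc_B''}). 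The \emph{main obstacle} is precisely this third step: converting the self-referential bound $\|\nabla F\|\le\sigma\sqrt{F+b}$ into a clean tail estimate. The Gronwall-along-geodesics computation above is what makes it rigorous, and it is also where one must be careful that $F$ is only locally Lipschitz (invoking Rademacher's theorem and absolute continuity of $F$ on almost every line) and that the isoperimetric enlargement is measured in the same $\ell_2$ norm as the gradient.
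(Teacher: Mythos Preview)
Your argument is essentially correct and reaches the same intermediate inequality as the paper's Theorem~\ref{spectrum_sharper}, namely
\[
\bigl|\,\|\hat\Sigma-\Sigma\|-M\,\bigr|\ \lesssim\ \|\Sigma\|^{1/2}\sqrt{M+\|\Sigma\|}\,\sqrt{\tfrac{t}{n}}\ \vee\ \|\Sigma\|\tfrac{t}{n},
\]
after which both you and the paper plug in Theorem~\ref{th_operator} and split on ${\bf r}(\Sigma)\lessgtr n$. The route, however, is genuinely different. The paper makes $\|W\|$ globally Lipschitz by multiplying with a cutoff $\varphi(\|W\|/\delta)$, applies Gaussian concentration to this truncated function, and then runs an iterative bootstrap $\delta_k=A+B\sqrt{\delta_{k-1}}$ starting from a crude Bernstein-type bound $\delta_0$; the recursion converges to $A\vee B^2$ after $\bar k\lesssim\log\log(\delta_0/B^2)$ steps, and a final iteration removes the residual $\log^{[3]}$ terms. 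Your approach bypasses all of this: the self-bounding gradient estimate $\|\nabla F\|\le\sigma\sqrt{F+b}$ is exactly the statement that $\sqrt{F+b}$ is globally $(\sigma/2)$-Lipschitz, so one application of Gaussian isoperimetry to $\sqrt{F+b}$ (your ``Gronwall along the segment'') gives the concentration directly. This is considerably cleaner; what the paper's route buys is that it never needs Rademacher-type differentiability of a supremum, only an elementary Lipschitz estimate for the truncated function.

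One imprecision worth fixing: in a general Banach space the identity $\|\hat\Sigma-\Sigma\|=\sup_{\|u\|\le1}|\langle(\hat\Sigma-\Sigma)u,u\rangle|$ need not hold (Theorem~\ref{th_operator} only uses the polarization inequality $\|\hat\Sigma-\Sigma\|\le 2\sup_u|\langle\cdot\rangle|$). Concentration of the diagonal supremum does not transfer to concentration of the operator norm with the right fluctuation scale. The fix is painless: work instead with $F=\sup_{\|u\|,\|v\|\le1}\langle(\hat\Sigma-\Sigma)u,v\rangle$; the gradient of $f_{u,v}(G)=n^{-1}\sum_j\langle X_j,u\rangle\langle X_j,v\rangle$ satisfies $\|\nabla f_{u,v}\|^2\le \tfrac{2}{n}\bigl(\langle\Sigma u,u\rangle\langle\hat\Sigma v,v\rangle+\langle\hat\Sigma u,u\rangle\langle\Sigma v,v\rangle\bigr)\le \tfrac{4}{n}\|\Sigma\|\,\|\hat\Sigma\|$, so the same self-bounding inequality and the rest of your argument go through unchanged.
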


In the case when $M$ is the median, this result is an immediate consequence of Theorem \ref{th_operator} and Theorem \ref{spectrum_sharper} that is given below and that provides an equivalent concentration inequality written in a somewhat implicit form. The bounds of 
Theorem \ref{cor_2} in the case when $M$ is the median imply that 
$$
\Bigl|{\mathbb E}\|\hat \Sigma-\Sigma\|- {\rm Med}(\|\hat \Sigma-\Sigma\|)\Bigr|\lesssim 
\|\Sigma\|\frac{1}{\sqrt{n}}
$$
when ${\bf r}(\Sigma)\leq n,$ and 
$$
\Bigl|{\mathbb E}\|\hat \Sigma-\Sigma\|- {\rm Med}(\|\hat \Sigma-\Sigma\|)\Bigr|\lesssim 
\|\Sigma\|\sqrt{\frac{{\bf r}(\Sigma)}{n}}\frac{1}{\sqrt{n}},
$$
when ${\bf r}(\Sigma)\geq n.$ This, in turn, implies the concentration bound in the case
when $M$ is the expectation.

\begin{theorem}
\label{spectrum_sharper} 
Let $X,X_1,\ldots,X_n$ be i.i.d. centered Gaussian random vectors in $E$ with  covariance $\Sigma$
and let $M$ be the median of $\|\hat \Sigma-\Sigma\|.$  
Then, there exists a constant $C>0$ such that for all $t\geq 1$ 
with probability at least $1-e^{-t},$ 
\begin{align}
\label{sha_sha_conc}
&
\Bigl|\|\hat\Sigma - \Sigma\|-M\Bigr| 
\leq C\biggl[\|\Sigma\|\biggl(\sqrt{\frac{t}{n}}\bigvee \frac{t}{n}\biggr)
\bigvee 
\|\Sigma\|^{1/2}M^{1/2}
\sqrt{\frac{t}{n}}
\biggr].
\end{align}
\end{theorem}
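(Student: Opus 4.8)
The plan is to realize $Z:=\|\hat\Sigma-\Sigma\|$ as a function of the i.i.d. Gaussian sample and to obtain Gaussian-type concentration by controlling the Lipschitz behavior of an auxiliary function. I would represent each $X_j=\Sigma^{1/2}\xi_j$ in a finite-dimensional reduction, with $\xi_j$ standard Gaussian (the separable Banach case then follows by finite-dimensional approximation, or by working directly in the Cameron--Martin framework), so that $Z$ becomes a function of the Gaussian vector $g=(\xi_1,\dots,\xi_n)$. Writing $Z=\sup_{\|u\|\le1,\|v\|\le1}\langle(\hat\Sigma-\Sigma)u,v\rangle$, with each summand quadratic in the data, I would compute at a maximizing pair $(u^*,v^*)$ the gradient of $Z$ with respect to $g$. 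A direct calculation together with $\|\Sigma^{1/2}u^*\|^2=\langle\Sigma u^*,u^*\rangle\le\|\Sigma\|$ gives, almost everywhere,
$$
\|\nabla Z\|^2 \;\le\; \frac{2\|\Sigma\|}{n^2}\sum_{j=1}^n\bigl(\langle X_j,u^*\rangle^2+\langle X_j,v^*\rangle^2\bigr).
$$

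Next I would convert this into a value-dependent gradient bound. Since $\frac1n\sum_j\langle X_j,u^*\rangle^2=\langle\hat\Sigma u^*,u^*\rangle\le\langle(\hat\Sigma-\Sigma)u^*,u^*\rangle+\langle\Sigma u^*,u^*\rangle\le Z+\|\Sigma\|$, and likewise for $v^*$, the previous display becomes
$$
\|\nabla Z\|^2\;\le\;\frac{4\|\Sigma\|}{n}\bigl(Z+\|\Sigma\|\bigr).
$$
The key device is then the substitution $V:=\bigl(Z+\|\Sigma\|\bigr)^{1/2}$: because $\nabla V=\nabla Z/(2V)$, the bound above yields $\|\nabla V\|\le\|\Sigma\|^{1/2}/\sqrt n$ almost everywhere, so $V$ is globally Lipschitz on the Gaussian space with constant $L=\|\Sigma\|^{1/2}/\sqrt n$. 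Applying the Gaussian concentration (isoperimetric) inequality for Lipschitz functions to $V$ around its median, I obtain that with probability at least $1-e^{-t}$,
$$
\bigl|V-{\rm Med}(V)\bigr|\;\le\;\delta,\qquad \delta:=\sqrt2\,\|\Sigma\|^{1/2}\sqrt{t/n}.
$$

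Finally I would transfer this back to $Z$. Since $V$ is an increasing function of $Z$, we have ${\rm Med}(V)=(M+\|\Sigma\|)^{1/2}$, and from $Z-M=V^2-{\rm Med}(V)^2=(V-{\rm Med}(V))(V+{\rm Med}(V))$ I get $|Z-M|\le 2\delta\,{\rm Med}(V)+\delta^2$. Using ${\rm Med}(V)=(M+\|\Sigma\|)^{1/2}\le M^{1/2}+\|\Sigma\|^{1/2}$ and substituting $\delta$ produces the three terms $\|\Sigma\|^{1/2}M^{1/2}\sqrt{t/n}$, $\|\Sigma\|\sqrt{t/n}$ and $\|\Sigma\|t/n$, which combine into the claimed bound. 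The algebra at this last stage is routine; the genuinely delicate points are the rigorous justification of the gradient estimate in the possibly infinite-dimensional Gaussian setting (existence of the maximizing pair, almost-everywhere differentiability via Rademacher's theorem, and the limiting argument from finite dimensions), and the verification that the square-root transform indeed yields a \emph{globally} Lipschitz $V$, so that Gaussian isoperimetry applies cleanly.
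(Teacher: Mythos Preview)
Your argument is correct and takes a genuinely different---and shorter---route than the paper. Both proofs reduce to finite dimensions via a Karhunen--Lo\`eve expansion and invoke Gaussian concentration for Lipschitz functions, but they handle the non-Lipschitz nature of $Z=\|\hat\Sigma-\Sigma\|$ differently. The paper truncates, setting $g=\|W\|\varphi(\|W\|/\delta)$ with a cutoff at level $\delta$; the resulting Lipschitz constant $\asymp(\|\Sigma\|+\|\Sigma\|^{1/2}\sqrt\delta)/\sqrt n$ still depends on $\delta$, which forces an iterative bootstrap $\delta_k=A+B\sqrt{\delta_{k-1}}$ starting from a crude initial bound and tracked down to its fixed point, with a final extra iteration to remove a $\log\log$ artifact. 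Your square-root substitution $V=(Z+\|\Sigma\|)^{1/2}$ accomplishes the same thing in one stroke: the value-dependent estimate $\|\nabla Z\|^2\le\tfrac{4\|\Sigma\|}{n}(Z+\|\Sigma\|)$ becomes the uniform bound $\|\nabla V\|\le\|\Sigma\|^{1/2}/\sqrt n$, and since $V$ is monotone in $Z$ the median transfers exactly, so $|Z-M|\le2\delta\,{\rm Med}(V)+\delta^2$ falls out with no recursion. Your approach yields explicit constants and a much cleaner argument; the paper's truncate-and-iterate scheme is more generic in that it does not depend on discovering a change of variable that linearizes the gradient bound. The technical points you flag (a.e.\ differentiability of the sup, identification of the gradient at a maximizer, global Lipschitz from an a.e.\ gradient bound on a locally Lipschitz function, and the finite-dimensional approximation) are all routine in this setting.
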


The proof of Theorem \ref{spectrum_sharper} is somewhat long and will be given in the next section.
Here we will state a couple corollaries of this theorem.

\begin{corollary}
\label{cor_1}
Under the assumptions and notations of Theorem \ref{spectrum_sharper}, there exists a constant 
$C>0$ such that, for all $t\geq 1,$ with probability at least $1-e^{-t},$ 
\begin{align}
\label{sha_sha_conc_A}
&
\|\hat\Sigma - \Sigma\|\leq 2M
+C\|\Sigma\|\biggl(\sqrt{\frac{t}{n}}\bigvee \frac{t}{n}\biggr).
\end{align}
\end{corollary}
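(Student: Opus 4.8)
The plan is to deduce Corollary \ref{cor_1} directly from Theorem \ref{spectrum_sharper}, the only work being to absorb the cross term $\|\Sigma\|^{1/2}M^{1/2}\sqrt{t/n}$ into the leading order $M$ by an elementary arithmetic--geometric mean inequality. I would begin by invoking Theorem \ref{spectrum_sharper} and dropping the absolute value on its left-hand side to retain only the upper estimate: on an event of probability at least $1-e^{-t}$,
$$
\|\hat\Sigma - \Sigma\| \leq M + C\biggl[\|\Sigma\|\biggl(\sqrt{\frac{t}{n}}\bigvee \frac{t}{n}\biggr)\bigvee \|\Sigma\|^{1/2}M^{1/2}\sqrt{\frac{t}{n}}\biggr].
$$
It then suffices to argue on this event and to treat the two terms inside the maximum separately.

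If the maximum is attained at the first term, we immediately obtain
$$
\|\hat\Sigma - \Sigma\| \leq M + C\|\Sigma\|\biggl(\sqrt{\frac{t}{n}}\bigvee \frac{t}{n}\biggr)\leq 2M + C\|\Sigma\|\biggl(\sqrt{\frac{t}{n}}\bigvee \frac{t}{n}\biggr),
$$
which is already of the desired form. The only nontrivial case is when the maximum equals the cross term. Here the key step is to view $C\|\Sigma\|^{1/2}M^{1/2}\sqrt{t/n}$ as the geometric mean of $M$ and $C^2\|\Sigma\|\tfrac{t}{n}$, so that the bound $2\sqrt{xy}\leq x+y$ gives
$$
C\|\Sigma\|^{1/2}M^{1/2}\sqrt{\frac{t}{n}} = \sqrt{M\cdot C^2\|\Sigma\|\frac{t}{n}}\leq \frac{1}{2}M + \frac{C^2}{2}\|\Sigma\|\frac{t}{n}.
$$
Substituting this back yields
$$
\|\hat\Sigma - \Sigma\|\leq \frac{3}{2}M + \frac{C^2}{2}\|\Sigma\|\frac{t}{n}\leq 2M + \frac{C^2}{2}\|\Sigma\|\biggl(\sqrt{\frac{t}{n}}\bigvee\frac{t}{n}\biggr),
$$
where the $\tfrac12 M$ produced by the split is combined with the $M$ already present to form $2M$, and $\tfrac{t}{n}\leq \sqrt{t/n}\bigvee t/n$ is used in the last step.

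There is essentially no obstacle in this argument; it is a routine consequence of the sharper but implicit estimate of Theorem \ref{spectrum_sharper}. The only point requiring attention is the bookkeeping of constants: the constant $C$ appearing in the statement of the corollary must be chosen large enough to dominate simultaneously the constant inherited directly from Theorem \ref{spectrum_sharper} in the first case and the constant $C^2/2$ produced by the arithmetic--geometric mean splitting in the second case, so a final relabeling $C\mapsto \max\{C,C^2/2\}$ completes the proof.
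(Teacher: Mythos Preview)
Your proof is correct and is essentially identical to the paper's own argument: the paper dispatches the corollary with the single line $2\|\Sigma\|^{1/2}M^{1/2}\sqrt{t/n}\leq M+\|\Sigma\|\,t/n$, which is exactly the arithmetic--geometric mean step you carry out, and your case split and constant bookkeeping are just a more explicit rendering of the same one-line reduction.
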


\begin{proof}
The proof easily follows from the next simple bound:
$
2\|\Sigma\|^{1/2}M^{1/2}
\sqrt{\frac{t}{n}}\leq 
M+\|\Sigma\|\frac{t}{n}.
$
\qed
\end{proof}

The following corollary can be viewed as an infinite-dimensional generalization 
of Theorem \ref{classical}.

\begin{corollary}
\label{cor_3}
Under the assumptions and notations of Theorem \ref{spectrum_sharper}, there exists a constant 
$C>0$ such that, for all $t\geq 1,$ with probability at least $1-e^{-t},$ 
 \begin{align}
\label{sha_sha_conc_A'}
&
\|\hat\Sigma - \Sigma\|\leq 
C\|\Sigma\|\biggl(\sqrt{\frac{{\bf r}(\Sigma)}{n}}\bigvee \frac{{\bf r}(\Sigma)}{n}
\bigvee 
\sqrt{\frac{t}{n}}\bigvee \frac{t}{n}\biggr).
\end{align}
This implies that for all $p\geq 1$ 
\begin{equation}
\label{L_p_bound}
\E^{1/p}\|\hat\Sigma - \Sigma\|^p \lesssim_{p}
\|\Sigma\|\max\left\lbrace \sqrt{\frac{\mathbf{r}(\Sigma)}{n}}, \frac{\mathbf{r}(\Sigma)}{n}\right\rbrace.
\end{equation}
\end{corollary}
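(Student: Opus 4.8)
The plan is to derive both assertions from Theorem \ref{spectrum_sharper} (equivalently Corollary \ref{cor_1}) together with the two-sided expectation bound of Theorem \ref{th_operator}; I write $\xi:=\|\hat\Sigma-\Sigma\|$ and $a:=\|\Sigma\|\max\{\sqrt{{\bf r}(\Sigma)/n},\,{\bf r}(\Sigma)/n\}$. To prove (\ref{sha_sha_conc_A'}), start from Corollary \ref{cor_1}: with probability at least $1-e^{-t}$ one has $\xi\le 2M+C\|\Sigma\|(\sqrt{t/n}\vee t/n)$, so it suffices to show $M\lesssim a$. Since $X$ is Gaussian, Theorem \ref{th_operator} gives ${\mathbb E}\xi\asymp a$; and the median--expectation comparison recorded after Theorem \ref{cor_2} gives $|{\mathbb E}\xi-{\rm Med}(\xi)|\lesssim \|\Sigma\|/\sqrt n$ when ${\bf r}(\Sigma)\le n$ and $|{\mathbb E}\xi-{\rm Med}(\xi)|\lesssim \|\Sigma\|\sqrt{{\bf r}(\Sigma)}/n$ when ${\bf r}(\Sigma)\ge n$, both of which are $\lesssim a$. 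Hence $M\lesssim a$ whether $M$ is the median or the mean, and inserting this into the bound from Corollary \ref{cor_1} and absorbing the resulting $a$-term into the stated maximum yields (\ref{sha_sha_conc_A'}).

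Next I would obtain the moment bound (\ref{L_p_bound}) by integrating the tail (\ref{sha_sha_conc_A'}). Bounding the four-fold maximum there by the sum of its ${\bf r}$-part and its $t$-part shows that the nonnegative variable $\zeta:=(\xi-C_0 a)_+$ (with $C_0$ the constant in (\ref{sha_sha_conc_A'})) satisfies ${\mathbb P}(\zeta>h(t))\le e^{-t}$ for all $t\ge1$, where $h(t):=C_0\|\Sigma\|(\sqrt{t/n}\vee t/n)$. Inverting $h$ yields a Bernstein-type tail ${\mathbb P}(\zeta>u)\le\exp(-c\min\{nu^2/\|\Sigma\|^2,\,nu/\|\Sigma\|\})$ for $u\ge h(1)$, and writing ${\mathbb E}\zeta^p=\int_0^\infty p u^{p-1}{\mathbb P}(\zeta>u)\,du$ and splitting the integral into the range $u<h(1)$ (where I bound ${\mathbb P}\le1$), the subgaussian range, and the subexponential range reduces each piece to a Gamma integral bounded by a $p$-dependent multiple of $(\|\Sigma\|/\sqrt n)^p$. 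Thus $\|\zeta\|_{L_p}\lesssim_p \|\Sigma\|/\sqrt n$, and since $\xi\le C_0 a+\zeta$ we get $\E^{1/p}\xi^p\le C_0 a+\|\zeta\|_{L_p}\lesssim_p a+\|\Sigma\|/\sqrt n$. Finally, because ${\bf r}(\Sigma)\ge 2/\pi$ --- indeed ${\mathbb E}\|X\|\ge\sup_{\|u\|\le1}{\mathbb E}|\langle X,u\rangle|=\sqrt{2/\pi}\,\|\Sigma\|^{1/2}$ --- we have $\|\Sigma\|/\sqrt n\lesssim\|\Sigma\|\sqrt{{\bf r}(\Sigma)/n}\le a$, so the bound is $\lesssim_p a$, proving (\ref{L_p_bound}).

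The routine parts are the Gamma-integral estimates and the max-versus-sum manipulation; the only steps requiring genuine care are the passage $M\lesssim a$, where the Gaussianity of $X$ enters through the lower half of Theorem \ref{th_operator} and the median--expectation comparison, and the verification that the residual scale $\|\Sigma\|/\sqrt n$ produced by the moments of $\zeta$ is always dominated by $a$. I expect no essential obstacle beyond this bookkeeping, since all of the hard probabilistic content has already been packaged into Theorems \ref{th_operator} and \ref{spectrum_sharper}.
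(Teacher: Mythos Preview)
Your proof is correct and follows the same route as the paper: the paper's own proof is just the two lines ``Bound (\ref{sha_sha_conc_A'}) follows immediately from Corollary \ref{cor_1} and Theorem \ref{th_operator}; bound (\ref{L_p_bound}) follows from (\ref{sha_sha_conc_A'}) by integrating the tail probabilities,'' and you have simply supplied the details. One small simplification: to get $M\lesssim a$ you do not need the median--expectation comparison recorded after Theorem \ref{cor_2}; since $M={\rm Med}(\xi)\le 2\,{\mathbb E}\xi$ by Markov and ${\mathbb E}\xi\lesssim a$ by Theorem \ref{th_operator}, the bound is immediate, which is presumably what the paper has in mind.
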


\begin{proof}
Bound (\ref{sha_sha_conc_A'}) follows immediately from Corollary \ref{cor_1} and 
Theorem \ref{th_operator}. Bound (\ref{L_p_bound}) follows from (\ref{sha_sha_conc_A'})
by integrating the tail probabilities. 

\qed
\end{proof}

\section{Proof of the concentration inequality}

In this section, we provide a proof of Theorem \ref{spectrum_sharper}.
We will use the following well known fact (see, e.g., \cite{Kwapien}).

\begin{theorem}
\label{Kwapien}
Let $X$ be a centered Gaussian random 
variable in a separable Banach space $E.$ Then there exists a sequence $\{x_k: k\geq 1\}$
of vectors in $E$ and a sequence $\{Z_k:k\geq 1\}$ of i.i.d. standard normal random variables 
such that 
$$X=\sum_{k=1}^{\infty} Z_k x_k,$$ 
where the series in the right hand side converges in $E$
a.s. and 
$$
\sum_{k=1}^{\infty}\|x_k\|^2<+\infty.
$$ 
\end{theorem}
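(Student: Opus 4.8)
The plan is to realize $X$ as a Karhunen--Lo\`eve-type Gaussian series by passing to the Gaussian Hilbert space generated by the linear functionals of $X$ and by factoring the covariance $\Sigma$ through it.

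First I would set up the linear structure. Since $X$ is Gaussian, Fernique's theorem gives $\E\|X\|^2<\infty$, so $X\in L^2(\Omega;E)$ and, as recalled in the introduction, $\Sigma$ is nuclear. Let $\mathcal{H}\subset L^2(\Omega,\mathbb{P})$ be the closed linear span of $\{\langle X,u\rangle:u\in E^\ast\}$; since $E$ is separable, $\mathcal{H}$ is a separable Hilbert space, and every element of $\mathcal{H}$ is a centered Gaussian variable, being an $L^2$ limit of such. I would then define the bounded operator $\mathcal{T}:\mathcal{H}\to E$ by the Bochner integral $\mathcal{T}\eta:=\E[\eta X]$; it is well defined since $\E\|\eta X\|\le\|\eta\|_{L^2}\,\E^{1/2}\|X\|^2$, and its adjoint is $\mathcal{T}^\ast u=\langle X,u\rangle$, so that $\Sigma=\mathcal{T}\mathcal{T}^\ast$ and $\langle \Sigma u,u\rangle=\|\mathcal{T}^\ast u\|_{\mathcal{H}}^2$.

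The heart of the argument is the choice of the right orthonormal basis. I would produce an orthonormal basis $\{Z_k\}$ of $\mathcal{H}$ for which $x_k:=\mathcal{T}Z_k=\E[Z_k X]$ satisfies $\sum_k\|x_k\|^2<\infty$. Because the $Z_k$ are orthonormal in $L^2$ and jointly Gaussian, they are automatically i.i.d. standard normal. For this basis $\Sigma=\sum_k x_k\otimes x_k$, and the required square-summability is precisely the statement that $\mathcal{T}$ admits a Hilbert--Schmidt factorization into $E$; this is where the nuclearity of the Gaussian covariance operator $\Sigma$ enters, and it is the structural fact due to Kwapie\'n \cite{Kwapien}. \emph{This is the main obstacle.} Square-summability genuinely fails for a generic orthonormal basis: an arbitrary basis yields only $\langle \Sigma u,u\rangle=\sum_k\langle x_k,u\rangle^2<\infty$ for each fixed $u$, not the uniform bound $\sum_k\|x_k\|^2<\infty$, and it is exactly the nuclearity of $\Sigma$ that guarantees a basis realizing the finite total.

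It remains to upgrade formal convergence of the series to almost sure convergence in $E$. Set $S_N:=\sum_{k=1}^N Z_k x_k$. For every $u\in E^\ast$ one has $\langle S_N,u\rangle=\sum_{k=1}^N\langle x_k,u\rangle Z_k$, which is the orthogonal projection in $\mathcal{H}$ of $\langle X,u\rangle$ onto the span of $Z_1,\dots,Z_N$; equivalently, by joint Gaussianity, $\langle S_N,u\rangle=\E[\langle X,u\rangle\mid Z_1,\dots,Z_N]$. Hence $S_N=\E[X\mid\mathcal{F}_N]$ with $\mathcal{F}_N:=\sigma(Z_1,\dots,Z_N)$, so $(S_N)$ is a closed, $L^2(\Omega;E)$-bounded martingale. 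Since $\mathcal{F}_\infty:=\sigma(\bigcup_N\mathcal{F}_N)=\sigma(\mathcal{H})\supseteq\sigma(X)$, using separability of $E$ and that each $\langle X,u\rangle\in\mathcal{H}$, the martingale convergence theorem for closed Banach-valued martingales (which needs no Radon--Nikodym property, the martingale being a Doob martingale of $X\in L^1(\Omega;E)$) gives $S_N\to\E[X\mid\mathcal{F}_\infty]=X$ almost surely and in $L^2(\Omega;E)$. Therefore $X=\sum_k Z_k x_k$ a.s. with $\sum_k\|x_k\|^2<\infty$, as claimed. Alternatively, once square-summability is secured, the almost sure convergence of the independent symmetric series follows directly from the It\^o--Nisio theorem.
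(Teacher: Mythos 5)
You should first know that the paper itself offers no proof of this statement: it is imported verbatim as a known fact with the citation \cite{Kwapien} (Kwapie\'n--Szyma\'nski). Your write-up mirrors that treatment, and the scaffolding you do supply is correct and goes beyond the paper: the factorization $\Sigma=\mathcal{T}\mathcal{T}^{\ast}$ through the Gaussian Hilbert space $\mathcal{H}$, the remark that any orthonormal basis of $\mathcal{H}$ consists of i.i.d.\ standard normals, the identification $S_N=\E[X\mid\sigma(Z_1,\dots,Z_N)]$, and the a.s.\ convergence via the closed vector-valued Doob martingale (for which no Radon--Nikodym property is needed) or via It\^o--Nisio are all sound, and you are right that convergence must be routed through one of these devices rather than through square-summability alone. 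But as a proof the proposal is circular at exactly its core: the existence of \emph{some} orthonormal basis with $\sum_k\|x_k\|^2<\infty$ is the entire nontrivial content of the theorem, and you defer it to the very reference the theorem is quoted from. Your diagnosis that a generic basis fails is accurate --- for Brownian motion in $C[0,1]$ the L\'evy--Ciesielski (Schauder) expansion has $2^j$ level-$j$ coefficient functions of sup-norm $\asymp 2^{-j/2}$, so $\sum_k\|x_k\|_\infty^2=\infty$, while the Karhunen--Lo\`eve sine basis gives $\|x_k\|_\infty\asymp k^{-1}$ and a finite sum --- but naming the obstacle is not the same as overcoming it, and your suggestion that nuclearity of $\Sigma$ is the operative mechanism is misleading: nuclearity by itself does not produce the basis.

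The missing idea is a blocking-plus-rotation argument, and it is short enough that you could have closed the gap. Since $S_N\to X$ in $L^2(\Omega;E)$, choose $N_1<N_2<\cdots$ with $\E\|X-S_{N_i}\|^2\leq 4^{-i}$, so that the blocks $T_i:=S_{N_i}-S_{N_{i-1}}$ are independent finite-dimensional Gaussian vectors with $\E\|T_i\|^2\leq 4^{-i+2}.$ Within a block spanned by $Z_k, k\in B_i,$ $|B_i|=m,$ replace the basis by $Z_j'=\sum_k O_{jk}Z_k$ for an orthogonal $m\times m$ matrix $O,$ which changes the coefficient vectors to $x_j'=\sum_k O_{jk}x_k.$ Averaging over Haar-random $O,$ each row is uniform on the sphere, and writing a standard Gaussian vector $g\in\mathbb{R}^m$ as $g=|g|\theta$ with $|g|$ independent of $\theta$ gives $\E\|T_i\|^2=\E|g|^2\,\E_{\theta}\bigl\|\sum_k\theta_k x_k\bigr\|^2=m\,\E_{\theta}\bigl\|\sum_k\theta_k x_k\bigr\|^2,$ whence $\E_{O}\sum_{j}\|x_j'\|^2=\E\|T_i\|^2.$ So some rotation achieves $\sum_{j\in B_i}\|x_j'\|^2\leq \E\|T_i\|^2,$ and summing over blocks yields $\sum_k\|x_k\|^2\leq \E\|S_{N_1}\|^2+\sum_{i\geq 2}4^{-i+2}<\infty,$ with the reassembled variables still i.i.d.\ standard normal and the series still converging a.s.\ to $X$ by your martingale argument. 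This (essentially the Kwapie\'n--Szyma\'nski construction) is what has to replace the citation for your proposal to count as a proof.
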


Note that under the assumptions and notations of Theorem \ref{Kwapien},
$$
\Sigma u = \sum_{k=1}^{\infty}\langle x_k,u\rangle x_k,\ u\in E^{\ast}.
$$

It easily follows from Theorem \ref{Kwapien} that, for $X^{(m)}:=\sum_{k=1}^m Z_k x_k,$ we have 
$$
{\mathbb E}\|X^{(m)}-X\|^2\to 0\ {\rm as}\ m\to \infty.
$$ 
Let now $\Sigma^{(m)}$ be the covariance operator of $X^{(m)}$ and 
$\hat \Sigma^{(m)}$ be the sample covariance operator based on observations $(X_1^{(m)}, \dots, X_n^{(m)})$ (with the notation $X_j^{(m)}$ having an obvious meaning and the sample 
size $n$ being fixed). Then, 
$$
\|\Sigma^{(m)}-\Sigma\|\to 0 
\ {\rm and} \  
{\mathbb E}\|\hat \Sigma^{(m)}-\hat \Sigma\|\to 
0\ {\rm as}\ m\to \infty.
$$ 
Thus, it is enough to prove the theorem only in the 
case when 
$$
X=X^{(m)}=\sum_{k=1}^m Z_k x_k.
$$ 
The general 
case would then follow by a straightforward limiting argument. 

The main ingredient of the proof is the classical Gaussian concentration inequality
(see, e.g., Ledoux and Talagrand \cite{Ledoux}, p. 21).

\begin{lemma}
\label{Gaussian_concentration}
Let $Z=(Z_1,\dots, Z_N)$ be a standard normal vector in ${\mathbb R}^N$ and 
let $f:{\mathbb R}^N\mapsto {\mathbb R}$ be a function satisfying 
the following Lipschitz condition with some $L>0:$
$$
\Bigl|f(z_1,\dots, z_N)-f(z_1',\dots, z_N')\Bigr|\leq 
L\biggl(\sum_{j=1}^N |z_j-z_j'|^2\biggr)^{1/2},\ z_1,\dots, z_N, z_1',\dots,z_N'\in {\mathbb R}.
$$ 
Then, for all $t>0,$ 
$$
{\mathbb P}\Bigl\{|f(Z)-{\rm Med}(f(Z))|\geq t\Bigr\}\leq 2\biggl(1-\Phi\biggl(\frac{t}{L}\biggr)\biggr),
$$
where $\Phi$ is the distribution function of a standard normal random variable. 
\end{lemma}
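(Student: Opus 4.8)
The plan is to derive the stated inequality from the Gaussian isoperimetric inequality, which is the one genuinely deep input; everything else is a soft measure-theoretic deduction. Write $\gamma_N$ for the standard Gaussian measure on ${\mathbb R}^N,$ so that $Z\sim\gamma_N,$ and for a Borel set $A\subset{\mathbb R}^N$ let $A_s:=\{x\in{\mathbb R}^N:d(x,A)\leq s\}$ denote its Euclidean $s$-enlargement. The Gaussian isoperimetric inequality asserts that if $\gamma_N(A)=\Phi(a)$ for some $a\in{\mathbb R},$ then $\gamma_N(A_s)\geq\Phi(a+s)$ for every $s\geq 0$; equivalently, among all sets of a prescribed Gaussian measure the half-spaces minimize the measure of the $s$-enlargement, and for a half-space one checks directly that $(\{x_1\leq a\})_s=\{x_1\leq a+s\},$ giving equality. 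I would invoke this as a known result, citing Ledoux and Talagrand \cite{Ledoux}.

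For the upper tail, set $m:={\rm Med}(f(Z))$ and $A:=\{x:f(x)\leq m\}.$ Since $f$ is Lipschitz it is continuous, so $A$ is closed and $m$ is a genuine median, whence $\gamma_N(A)\geq 1/2=\Phi(0);$ thus $\gamma_N(A)=\Phi(a)$ with $a\geq 0.$ The Lipschitz condition controls the enlargement: if $x\in A_{t/L},$ closedness of $A$ yields $y\in A$ with $|x-y|\leq t/L,$ and then $f(x)\leq f(y)+L|x-y|\leq m+t,$ so that $A_{t/L}\subseteq\{x:f(x)\leq m+t\}.$ Applying the isoperimetric inequality with $s=t/L$ and using $a\geq 0,$
$$
{\mathbb P}\{f(Z)\leq m+t\}\geq\gamma_N(A_{t/L})\geq\Phi(a+t/L)\geq\Phi(t/L),
$$
which rearranges to ${\mathbb P}\{f(Z)>m+t\}\leq 1-\Phi(t/L).$

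The lower tail is handled symmetrically by running the same argument on $B:=\{x:f(x)\geq m\},$ which again satisfies $\gamma_N(B)\geq 1/2$ and whose $(t/L)$-enlargement lies in $\{x:f(x)\geq m-t\}$ via the bound $f(x)\geq f(y)-L|x-y|.$ This gives ${\mathbb P}\{f(Z)<m-t\}\leq 1-\Phi(t/L).$ Adding the two one-sided estimates through the union bound yields
$$
{\mathbb P}\{|f(Z)-m|\geq t\}\leq 2\bigl(1-\Phi(t/L)\bigr),
$$
as claimed.

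The only nontrivial ingredient is the Gaussian isoperimetric inequality, and that is where the real work sits: it can be established, for instance, by Ehrhard's symmetrization argument, or obtained as a limit of the spherical isoperimetric inequality via Poincar\'e's limit. Since the paper treats the lemma as classical, I would simply cite it and present only the short reduction above. A technical remark worth a line: measurability and existence of the median follow from continuity of $f,$ and the argument uses only $\gamma_N(\{f\leq m\})\geq 1/2$ and $\gamma_N(\{f\geq m\})\geq 1/2,$ so it remains valid for any choice of median when the median fails to be unique.
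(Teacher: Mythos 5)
Your proof is correct and is exactly the standard derivation from the Gaussian isoperimetric inequality that the paper itself invokes (it cites Ledoux--Talagrand and states the lemma ``easily follows from the Gaussian isoperimetric inequality'' without spelling out details). Your two-sided enlargement argument on the sets $\{f\leq m\}$ and $\{f\geq m\}$, including the remarks on attainment of the distance and non-uniqueness of the median, fills in those details faithfully, so there is nothing to correct.
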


This result easily follows from the Gaussian isoperimetric inequality. We will also need another 
consequence of this inequality:

\begin{lemma}
\label{Gaussian_concentration_A}
Under the assumptions of Lemma \ref{Gaussian_concentration}, suppose that for some $M$ and for some  $\alpha>0$
$$
{\mathbb P}\{f(Z)\geq M\}\geq \alpha.
$$
Then, there exists a constant $D>0$ (possibly depending on $\alpha$) such that, for all $t\geq 1,$ with probability 
at least $1-e^{-t},$  
$$f(Z)\geq M - D L\sqrt{t}.$$
\end{lemma}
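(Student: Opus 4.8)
The plan is to combine the two-sided Gaussian concentration of Lemma \ref{Gaussian_concentration} around the median with a preliminary comparison between the threshold $M$ and the median $m:={\rm Med}(f(Z))$. Throughout I will use the elementary Gaussian tail bound $1-\Phi(x)\leq \frac12 e^{-x^2/2}$ for $x\geq 0$, and I will exploit the hypothesis $t\geq 1$ to absorb lower-order terms. Since $\alpha\in(0,1]$ (otherwise the hypothesis is vacuous), the quantity $c_\alpha:=\Phi^{-1}(1-\alpha/2)$ is well defined and nonnegative.

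First I would show that $M$ cannot lie far above the median, quantitatively $M\leq m+Lc_\alpha$. If $M\leq m$ this is immediate. If $M>m$, write $M=m+s$ with $s>0$; then the hypothesis $\mathbb{P}\{f(Z)\geq M\}\geq \alpha$ together with Lemma \ref{Gaussian_concentration} gives
$$
\alpha\leq \mathbb{P}\{f(Z)-m\geq s\}\leq \mathbb{P}\{|f(Z)-m|\geq s\}\leq 2\Bigl(1-\Phi(s/L)\Bigr),
$$
so that $1-\Phi(s/L)\geq \alpha/2$ and hence $s/L\leq \Phi^{-1}(1-\alpha/2)=c_\alpha$. This yields $M-m\leq Lc_\alpha$ in all cases.

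Next I would control the lower tail of $f(Z)$ using the same concentration inequality. Fix $t\geq 1$ and set $D:=\sqrt2+c_\alpha$. Since $M\leq m+Lc_\alpha$, the event $\{f(Z)<M-DL\sqrt t\}$ is contained in $\{f(Z)<m-s\}$ with $s:=DL\sqrt t-Lc_\alpha=L\bigl(D\sqrt t-c_\alpha\bigr)$. Because $D-\sqrt2=c_\alpha\geq 0$ and $\sqrt t\geq 1$, we have $D\sqrt t-c_\alpha=\sqrt2\,\sqrt t+c_\alpha(\sqrt t-1)\geq \sqrt2\,\sqrt t>0$, so $s>0$ and $(s/L)^2\geq 2t$. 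Lemma \ref{Gaussian_concentration} and the tail bound above then give
$$
\mathbb{P}\{f(Z)<M-DL\sqrt t\}\leq \mathbb{P}\{|f(Z)-m|\geq s\}\leq 2\Bigl(1-\Phi(s/L)\Bigr)\leq e^{-(s/L)^2/2}\leq e^{-t},
$$
which is exactly the desired conclusion that $f(Z)\geq M-DL\sqrt t$ with probability at least $1-e^{-t}$.

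The argument is essentially routine once one sees this two-step structure, and I do not anticipate any genuine obstacle beyond constant-tracking. The only points requiring care are the choice $D=\sqrt2+c_\alpha$ and the bookkeeping ensuring $(s/L)^2\geq 2t$ for \emph{every} $t\geq 1$; this is precisely where the normalization $t\geq 1$ (rather than $t>0$, which would force $s$ to be negative for small $t$) and the $\alpha$-dependence of $D$ enter.
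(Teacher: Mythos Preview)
Your proof is correct. The paper itself gives no detailed argument for this lemma, merely introducing it as ``another consequence of this inequality'' (the Gaussian isoperimetric inequality), so there is no explicit proof to compare against. Your two-step route through Lemma~\ref{Gaussian_concentration} --- first bounding $M-m$ by $Lc_\alpha$ via the upper tail, then controlling the lower tail --- is a clean and standard derivation; the constant tracking and the use of $t\geq 1$ to absorb $c_\alpha$ are handled properly. One could alternatively argue directly from isoperimetry without passing through the median: setting $A=\{f\geq M\}$, the Gaussian isoperimetric inequality gives $\gamma(A_r)\geq \Phi(\Phi^{-1}(\alpha)+r)$ for the $r$-enlargement $A_r$, and the Lipschitz condition yields $\{f\geq M-Lr\}\supseteq A_r$; choosing $r=D\sqrt t$ then gives the claim. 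This is perhaps what the authors had in mind, but your argument via Lemma~\ref{Gaussian_concentration} is equally valid and no less elementary.
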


Denote 
$$
g(X_1,\dots, X_n):=\|W\|\varphi\Bigl(\frac{\|W\|}{\delta}\Bigr),
$$
where
$$
W=\hat \Sigma-\Sigma,\ \ 
\hat \Sigma u = n^{-1}\sum_{j=1}^n \langle X_j,u\rangle X_j,
$$
where $\varphi$ is an arbitrary fixed Lipschitz function with constant $1$ on ${\mathbb R}_{+},$ $0\leq \varphi (s)\leq 1,$
$\varphi (s)=1, s\leq 1,$ $\varphi(s)=0, s>2,$
and where $\delta>0$ is a fixed number (to be chosen later). 
With a little abuse of notation, assume for now that 
$Z:=(Z_{k,j}, k=1,\dots, m, j=1,\dots, n)\in {\mathbb R}^{mn},$
$Z':=(Z_{k,j}', k=1,\dots, m, j=1,\dots, n)\in {\mathbb R}^{mn}$
are nonrandom vectors in ${\mathbb R}^{mn}$
and $X_1,\dots, X_n, X_1',\dots, X_n'$ 
are nonrandom vectors in $E$ defined as follows:
$$
X_j = \sum_{k=1}^m Z_{k,j} x_k,\ X_j' = \sum_{k=1}^m Z_{k,j}' x_k.
$$

Lemma \ref{Gaussian_concentration} will be applied to the function 
$f(Z)=g(X_1,\dots, X_n).$
We have to check the Lipschitz condition for this function.
To this end, we will prove the following lemma.

\begin{lemma}
\label{Lipschitz_constant_XYZ}
There exists a numerical constant $D>0$ such that, for all $Z,Z'\in {\mathbb R}^{mn},$
\begin{equation}
\label{lip_lip_CC_XYZ}
|f(Z)-f(Z')| 
\leq 
D
\frac{\|\Sigma\|+\|\Sigma\|^{1/2}\sqrt{\delta}}{\sqrt{n}}
\biggl(\sum_{j=1}^n \sum_{k=1}^m|Z_{k,j}-Z_{k,j}'|^2\biggr)^{1/2}.
\end{equation}
\end{lemma}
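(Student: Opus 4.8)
The plan is to exploit the truncation built into $f$. Writing $W(Z)=\hat\Sigma(Z)-\Sigma$ with $\hat\Sigma(Z)=n^{-1}\sum_{j=1}^n X_j\otimes X_j$ (where $a\otimes b$ denotes the rank-one operator $u\mapsto\langle a,u\rangle b$ and $X_j=\sum_{k=1}^m Z_{k,j}x_k$), I would first observe that $f(Z)=h(\|W(Z)\|)$ with $h(r):=r\varphi(r/\delta)$. Since $\varphi$ is $1$-Lipschitz, $0\le\varphi\le1$, and $\varphi(s)=0$ for $s>2$, the scalar function $h$ satisfies $|h'(r)|\le 3$ for $r\le 2\delta$ and $h'(r)=0$ for $r>2\delta$; hence $h$ is $3$-Lipschitz and supported on $[0,2\delta]$. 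In particular $f(Z)\neq0$ forces $\|\hat\Sigma(Z)\|\le\|\Sigma\|+\|W(Z)\|\le\|\Sigma\|+2\delta$. This localization is the whole point: a crude global Lipschitz bound for $Z\mapsto\hat\Sigma(Z)$ would involve the trace-like quantity $\sum_j\|X_j\|^2$, which is \emph{not} controlled by $\|\hat\Sigma\|$, whereas the truncation lets me replace it by $\|\hat\Sigma\|\le\|\Sigma\|+2\delta$.

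The key estimate I would establish is that, writing $\|Z-Z'\|_2:=\bigl(\sum_{j=1}^n\sum_{k=1}^m|Z_{k,j}-Z_{k,j}'|^2\bigr)^{1/2}$, for all $Z,Z'\in\mathbb R^{mn}$
$$
\|\hat\Sigma(Z)-\hat\Sigma(Z')\|\le\|\Sigma\|^{1/2}\,\frac{\|Z-Z'\|_2}{\sqrt n}\,\Bigl(\|\hat\Sigma(Z)\|^{1/2}+\|\hat\Sigma(Z')\|^{1/2}\Bigr).
$$
To prove it I set $c_j:=X_j-X_j'$ and use the rank-one identity $X_j\otimes X_j-X_j'\otimes X_j'=c_j\otimes X_j+X_j'\otimes c_j$, so that $\hat\Sigma(Z)-\hat\Sigma(Z')=n^{-1}\sum_j c_j\otimes X_j+n^{-1}\sum_j X_j'\otimes c_j$. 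For the first sum I bound its operator norm by $\sup_{\|u\|,\|v\|\le1}\bigl|n^{-1}\sum_j\langle c_j,u\rangle\langle X_j,v\rangle\bigr|$ and apply Cauchy--Schwarz in $j$: the factor $n^{-1}\sum_j\langle X_j,v\rangle^2=\langle\hat\Sigma(Z)v,v\rangle\le\|\hat\Sigma(Z)\|$ is exactly the operator norm, while for the other factor I use $\langle c_j,u\rangle=\sum_k(Z_{k,j}-Z_{k,j}')\langle x_k,u\rangle$ together with Cauchy--Schwarz in $k$ and $\sum_k\langle x_k,u\rangle^2=\langle\Sigma u,u\rangle\le\|\Sigma\|$ to obtain $n^{-1}\sum_j\langle c_j,u\rangle^2\le\|\Sigma\|\,\|Z-Z'\|_2^2/n$. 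The second sum is handled symmetrically, producing $\|\hat\Sigma(Z')\|^{1/2}$. The arrangement of the two Cauchy--Schwarz applications so that the ``bad'' factor collapses to $\langle\hat\Sigma v,v\rangle\le\|\hat\Sigma\|$ rather than $\sum_j\|X_j\|^2$ is the crucial point.

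Finally I would combine the truncation with this estimate by a three-case argument. If $\|W(Z)\|,\|W(Z')\|>2\delta$ then $f(Z)=f(Z')=0$. If both are $\le2\delta$ then $\|\hat\Sigma(Z)\|,\|\hat\Sigma(Z')\|\le\|\Sigma\|+2\delta$, and $|f(Z)-f(Z')|\le 3\|\hat\Sigma(Z)-\hat\Sigma(Z')\|$, the key estimate, and $\|\Sigma\|^{1/2}(\|\Sigma\|+2\delta)^{1/2}\lesssim\|\Sigma\|+\|\Sigma\|^{1/2}\sqrt\delta$ give the claim. The main subtlety is the mixed case, say $\|W(Z)\|\le2\delta<\|W(Z')\|$: here I use that $s\mapsto\|W(\cdot)\|$ is continuous along the segment $[Z,Z']$ to select, via the intermediate value theorem, a point $Z''$ on that segment with $\|W(Z'')\|=2\delta$; then $f(Z'')=2\delta\,\varphi(2)=0=f(Z')$, both $Z$ and $Z''$ lie in the localized region, and $\|Z-Z''\|_2\le\|Z-Z'\|_2$, so the both-inside bound applied to the pair $(Z,Z'')$ closes the argument. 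I expect this reduction of a pair straddling the truncation boundary to a pair inside it to be the only delicate step, everything else reducing to the two Cauchy--Schwarz estimates above.
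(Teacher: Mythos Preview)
Your argument is correct. The two Cauchy--Schwarz estimates you use (in $j$ to extract $\|\hat\Sigma\|^{1/2}$, and in $k$ to extract $\|\Sigma\|^{1/2}$ from $\sum_k\langle x_k,u\rangle^2=\langle\Sigma u,u\rangle$) coincide exactly with the paper's, and the observation that $h(r)=r\varphi(r/\delta)$ is globally $3$-Lipschitz and supported on $[0,2\delta]$ matches the paper's bound $|g-g'|\le 3\|W-W'\|$ together with the cap $|g-g'|\le 2\delta$.

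Where you genuinely diverge from the paper is in the handling of the mixed case $\|W(Z)\|\le 2\delta<\|W(Z')\|$. The paper assumes only that one endpoint is localized, bounds $\|\hat\Sigma'\|^{1/2}\le\|\hat\Sigma\|^{1/2}+\|W-W'\|^{1/2}$, and thereby obtains an implicit inequality of the form $\|W-W'\|\le a+b\,\|W-W'\|^{1/2}$; solving this yields a two-term maximum (their display \eqref{E_E'}), and the quadratic term must then be tamed by a further case split against the global cap $2\delta$. Your intermediate-value reduction to a point $Z''$ on the segment with $\|W(Z'')\|=2\delta$ (so that $f(Z'')=0=f(Z')$ and both $Z,Z''$ lie in the localized set with $\|Z-Z''\|_2\le\|Z-Z'\|_2$) bypasses this bootstrap entirely: once both endpoints are localized, the key estimate gives the Lipschitz bound in one line. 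This is a cleaner route and yields an explicit constant (roughly $D=6\sqrt{2}$); the paper's approach, on the other hand, is slightly more robust in that it never needs continuity of $Z\mapsto\|W(Z)\|$ along segments, though in the present quadratic-polynomial setting that continuity is immediate.
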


\begin{proof}
Obviously, $0\leq g(X_1,\dots,X_n)\leq 2\delta,$ $0\leq g(X_1',\dots,X_n')\leq 2\delta,$
implying that 
\begin{equation}
\label{fdelta}
|g(X_1,\dots, X_n)-g(X_1',\dots, X_n')|\leq 2\delta.
\end{equation}
Denote 
$$
W'=\hat \Sigma'-\Sigma, \ \ 
\hat \Sigma'=n^{-1}\sum_{j=1}^n X_j'\otimes X_j'.
$$
It is enough to consider the case when $\|W\|\leq 2\delta$ or $\|W'\|\leq 2\delta$ (otherwise, the claim of the lemma is obvious). To be specific, assume that $\|W\|\leq 2\delta.$ Then, using the assumption that $\varphi$ is Lipschitz 
with constant $1,$ we get
\begin{align}
\label{li_li}
&
|g(X_1,\dots, X_n)-g(X_1',\dots, X_n')|=
\biggl|\|W\|\varphi\Bigl(\frac{\|W\|}{\delta}\Bigr)-
\|W'\|\varphi\Bigl(\frac{\|W'\|}{\delta}\Bigr)\biggr|
\\
&
\nonumber
\leq \|W-W'\| + \frac{\|W\|}{\delta}\|W-W'\|
\leq 3\|W-W'\|.
\end{align}

We will now control $\|W-W'\|.$ Note that 
$$
\|W-W'\|= 
\sup_{\|u\|\leq 1,\|v\|\leq 1}
\Bigl|\Bigl\langle (W-W')u,v\Bigr\rangle\Bigr|
$$
$$
=
\sup_{\|u\|\leq 1,\|v\|\leq 1}
\biggl|n^{-1}\sum_{j=1}^n 
\langle X_j,u\rangle \langle X_j,v\rangle
-\langle X_j',u\rangle \langle X_j',v\rangle
\biggr|
$$
$$
\leq 
\sup_{\|u\|\leq 1,\|v\|\leq 1}
\biggl|n^{-1}\sum_{j=1}^n 
\langle X_j,u\rangle \langle X_j-X_j',v\rangle
\biggr|+
\sup_{\|u\|\leq 1,\|v\|\leq 1}
\biggl|n^{-1}\sum_{j=1}^n 
\langle X_j-X_j',u\rangle \langle X_j',v\rangle
\biggr|
$$
$$
\leq 
\sup_{\|u\|\leq 1}
\biggl(n^{-1}\sum_{j=1}^n 
\langle X_j,u\rangle^2
\biggr)^{1/2} 
\sup_{\|v\|\leq 1}
\biggl(n^{-1}\sum_{j=1}^n\langle X_j-X_j',v\rangle^2\biggr)^{1/2}
$$
$$
+
\sup_{\|u\|\leq 1}
\biggl(n^{-1}\sum_{j=1}^n 
\langle X_j-X_j',u\rangle^2
\biggr)^{1/2} 
\sup_{\|v\|\leq 1}
\biggl(n^{-1}\sum_{j=1}^n\langle X_j',v\rangle^2\biggr)^{1/2}
$$
$$
\leq \frac{\|\hat \Sigma\|^{1/2}+
\|\hat \Sigma'\|^{1/2}}{\sqrt{n}}
\sup_{\|u\|\leq 1}
\biggl(\sum_{j=1}^n 
\langle X_j-X_j',u\rangle^2
\biggr)^{1/2} 
$$
Since $\|W\|\leq 2\delta,$  
$$
\|\hat \Sigma\|^{1/2}+\|\hat \Sigma'\|^{1/2}
\leq 
2\|\hat \Sigma\|^{1/2}+\|W-W'\|^{1/2}
\leq 
2\|\Sigma\|^{1/2}+2\sqrt{2\delta}+\|W-W'\|^{1/2}.
$$
Therefore,
$$
\|W-W'\|\leq 
\frac{2\|\Sigma\|^{1/2}+2\sqrt{2\delta}}{\sqrt{n}}
\sup_{\|u\|\leq 1}
\biggl(\sum_{j=1}^n 
\langle X_j-X_j',u\rangle^2
\biggr)^{1/2} 
$$
$$
+
\frac{\|W-W'\|^{1/2}}{\sqrt{n}}
\sup_{\|u\|\leq 1}
\biggl(\sum_{j=1}^n 
\langle X_j-X_j',u\rangle^2
\biggr)^{1/2},
$$
which easily implies 
\begin{equation}
\label{E_E'}
\|W-W'\|\leq 
\frac{4\|\Sigma\|^{1/2}+4\sqrt{2\delta}}{\sqrt{n}}
\sup_{\|u\|\leq 1}
\biggl(\sum_{j=1}^n 
\langle X_j-X_j',u\rangle^2
\biggr)^{1/2} 
\bigvee
\frac{4}{n}
\sup_{\|u\|\leq 1}
\sum_{j=1}^n 
\langle X_j-X_j',u\rangle^2.
\end{equation}
Substituting the last bound in (\ref{li_li}), we get 
\begin{align}
\label{li_li''}
&
|g(X_1,\dots, X_n)-g(X_1',\dots, X_n')|\leq 
\\
\nonumber
&
12\frac{\|\Sigma\|^{1/2}+\sqrt{2\delta}}{\sqrt{n}}
\sup_{\|u\|\leq 1}
\biggl(\sum_{j=1}^n 
\langle X_j-X_j',u\rangle^2
\biggr)^{1/2} 
\bigvee
\frac{12}{n}
\sup_{\|u\|\leq 1}
\sum_{j=1}^n 
\langle X_j-X_j',u\rangle^2.
\end{align}
In view of (\ref{fdelta}), the left hand side is also bounded 
from above by $2\delta,$
which allows one to get from (\ref{li_li''}) that 
\begin{align}
\label{li_li'''}
&
|g(X_1,\dots, X_n)-g(X_1',\dots, X_n')|\leq 
\\
\nonumber
&
12 
\frac{\|\Sigma\|^{1/2}+\sqrt{2\delta}}{\sqrt{n}}
\sup_{\|u\|\leq 1}
\biggl(\sum_{j=1}^n 
\langle X_j-X_j',u\rangle^2
\biggr)^{1/2} 
\bigvee
\biggl(\frac{12}{n}
\sup_{\|u\|\leq 1}
\sum_{j=1}^n 
\langle X_j-X_j',u\rangle^2
\bigwedge 2\delta\biggr).
\end{align}
In the case when 
$$
\sup_{\|u\|\leq 1}
\biggl(\sum_{j=1}^n 
\langle X_j-X_j',u\rangle^2
\biggr)^{1/2} 
\leq 
\sqrt{\frac{\delta n}{6}},
$$
we have 
$$
\frac{12}{n}
\sup_{\|u\|\leq 1}
\sum_{j=1}^n 
\langle X_j-X_j',u\rangle^2
\bigwedge 2\delta\leq 
\frac{12}{\sqrt{6}}\frac{\sqrt{\delta}}{\sqrt{n}}
\sup_{\|u\|\leq 1}
\biggl(\sum_{j=1}^n 
\langle X_j-X_j',u\rangle^2
\biggr)^{1/2}. 
$$
It is also easy to check that the same bound holds in the opposite case, too.
As a consequence, (\ref{li_li'''}) implies that with some numerical constant $D>0,$
\begin{align}
\label{li_li_A'''}
&
|g(X_1,\dots, X_n)-g(X_1',\dots, X_n')|\leq 
\\
\nonumber
&
D 
\frac{\|\Sigma\|^{1/2}+\sqrt{\delta}}{\sqrt{n}}
\sup_{\|u\|\leq 1}
\biggl(\sum_{j=1}^n 
\langle X_j-X_j',u\rangle^2
\biggr)^{1/2}. 
\end{align}
We will now upper bound 
$$
\sup_{\|u\|\leq 1}
\biggl(\sum_{j=1}^n 
\langle X_j-X_j',u\rangle^2
\biggr)^{1/2}. 
$$
Note that 
$$
X_j-X_j' = \sum_{k=1}^m (Z_{k,j}-Z_{k,j}') x_k,
$$
implying that 
$$
\sup_{\|u\|\leq 1}
\biggl(\sum_{j=1}^n 
\langle X_j-X_j',u\rangle^2
\biggr)^{1/2} 
\leq 
\sup_{\|u\|\leq 1}
\biggl(\sum_{j=1}^n 
\biggl(\sum_{k=1}^m(Z_{k,j}-Z_{k,j}')\langle x_k,u\rangle\biggr)^2
\biggr)^{1/2} 
$$
$$
\leq 
\sup_{\|u\|\leq 1}
\biggl(\sum_{j=1}^n\sum_{k=1}^m(Z_{k,j}-Z_{k,j}')^2\sum_{k=1}^m\langle x_k,u\rangle^2
\biggr)^{1/2} 
$$
$$
\leq 
\sup_{\|u\|\leq 1}\biggl(\sum_{k=1}^m\langle x_k,u\rangle^2\biggr)^{1/2}
\biggl(\sum_{j=1}^n\sum_{k=1}^m(Z_{k,j}-Z_{k,j}')^2\biggr)^{1/2} 
$$
$$
=(\sup_{\|u\|\leq 1}\langle \Sigma u,u\rangle)^{1/2}
\biggl(\sum_{j=1}^n\sum_{k=1}^m(Z_{k,j}-Z_{k,j}')^2\biggr)^{1/2}=
\|\Sigma\|^{1/2} \biggl(\sum_{j=1}^n\sum_{k=1}^m(Z_{k,j}-Z_{k,j}')^2\biggr)^{1/2}.
$$
Combining this with bound (\ref{li_li_A'''}) yields
(\ref{lip_lip_CC_XYZ}). 
\qed
\end{proof}

In  what follows, denote 
$$
M:={\rm Med}(\|\hat \Sigma-\Sigma\|)\ {\rm and}\ M_g:={\rm Med}(g(X_1,\dots, X_n)).
$$
It follows from lemmas \ref{Gaussian_concentration} and \ref{Lipschitz_constant_XYZ}
that, for all $t\geq 1$ with probability at least $1-e^{-t},$
\begin{equation}
\label{conc_bd}
\Bigl|g(X_1,\dots, X_n)-M_g\Bigr|
\leq 
D_1 \Bigl(\|\Sigma\|+\|\Sigma\|^{1/2}\sqrt{\delta}\Bigr)\sqrt{\frac{t}{n}},
\end{equation}
where $D_1$ is a numerical constant. We will use this bound to get that, on the event where $\|W\|\leq \delta$
and, at the same time, concentration bound (\ref{conc_bd}) holds, we have 
$$
\|W\|=g(X_1,\dots, X_n)\leq M_g+ D_1 \Bigl(\|\Sigma\|+\|\Sigma\|^{1/2}\sqrt{\delta}\Bigr)\sqrt{\frac{t}{n}}
$$ 
$$
\leq M+ D_1 \Bigl(\|\Sigma\|+\|\Sigma\|^{1/2}\sqrt{\delta}\Bigr)\sqrt{\frac{t}{n}}. 
$$
Denote 
$$
A:= M+D_1 \|\Sigma\| \sqrt{\frac{t}{n}},
\ \ \ 
B:=D_1\|\Sigma\|^{1/2}\sqrt{\frac{t}{n}}.
$$
Then we have
\begin{equation}
\label{main_bou}
{\mathbb P}\biggl\{\delta \geq \|W\| \geq A+B\sqrt{\delta}\biggr\}
\leq e^{-t}.
\end{equation}

We will need the following easy fact.

\begin{lemma}
\label{very_easy}
There exists a constant $D_2>0$ such that
for all $t>0,$ with probability at least $1-e^{-t}$
\begin{equation}
\label{very_easy_b1}
\|\hat \Sigma-\Sigma\|\leq 
D_2 \|\Sigma\|\bigg[{\bf r}(\Sigma)\biggl(\sqrt{\frac{t}{n}}\bigvee \frac{t}{n}\biggr)+{\bf r}(\Sigma)+1\biggr].
\end{equation}
In particular, this implies that, for some constant $D_2>0,$ 
\begin{equation}
\label{very_easy_b2}
M\leq D_2 \|\Sigma\|({\bf r}(\Sigma)+1)
\end{equation}
\end{lemma}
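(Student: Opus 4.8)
The plan is to prove the crude bound (\ref{very_easy_b1}) by combining the triangle inequality $\|\hat\Sigma-\Sigma\|\le\|\hat\Sigma\|+\|\Sigma\|$ with a scalar Bernstein inequality for the average of the squared norms $\|X_j\|^2$, and then to read off the median bound (\ref{very_easy_b2}) as an immediate corollary.

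First I would bound the operator norm of the sample covariance pointwise in the sample. Using the duality formula $\|x\|=\sup_{\|u\|\le 1}\langle x,u\rangle$ and interchanging the supremum with the average,
$$
\|\hat\Sigma\|=\sup_{\|u\|\le 1}n^{-1}\sum_{j=1}^n\langle X_j,u\rangle^2\le n^{-1}\sum_{j=1}^n\sup_{\|u\|\le 1}\langle X_j,u\rangle^2=n^{-1}\sum_{j=1}^n\|X_j\|^2 .
$$
This reduces the problem to a concentration statement for the i.i.d.\ scalars $\|X_j\|^2$.

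Next I would quantify the $\psi_1$ size of these scalars. Since $X$ is Gaussian, the Gaussian concentration inequality for the norm gives $\bigl\|\,\|X\|\,\bigr\|_{\psi_2}\lesssim\mathbb E\|X\|$ --- here one also uses $\|\Sigma\|^{1/2}\lesssim\mathbb E\|X\|$, which follows from $\mathbb E\|X\|\ge\sup_{\|u\|\le 1}\mathbb E|\langle X,u\rangle|=\sqrt{2/\pi}\,\|\Sigma\|^{1/2}$. Consequently
$$
\bigl\|\,\|X\|^2\,\bigr\|_{\psi_1}\le\bigl\|\,\|X\|\,\bigr\|_{\psi_2}^2\lesssim\bigl(\mathbb E\|X\|\bigr)^2=\|\Sigma\|\,{\bf r}(\Sigma),
$$
and in particular $\mathbb E\|X\|^2=\|\Sigma\|\,\tilde{\bf r}(\Sigma)\lesssim\|\Sigma\|\,{\bf r}(\Sigma)$. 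Applying the $\psi_1$ version of Bernstein's inequality recorded in the introduction to the centered variables $\|X_j\|^2-\mathbb E\|X\|^2$, I obtain that for all $t>0$, with probability at least $1-e^{-t}$,
$$
n^{-1}\sum_{j=1}^n\|X_j\|^2\le\mathbb E\|X\|^2+C\|\Sigma\|\,{\bf r}(\Sigma)\biggl(\sqrt{\frac{t}{n}}\bigvee\frac{t}{n}\biggr).
$$
Combining the three displays and absorbing $\mathbb E\|X\|^2\lesssim\|\Sigma\|{\bf r}(\Sigma)$ together with the additive $\|\Sigma\|$ into the bracket $\|\Sigma\|\bigl[{\bf r}(\Sigma)(\sqrt{t/n}\bigvee t/n)+{\bf r}(\Sigma)+1\bigr]$ yields (\ref{very_easy_b1}).

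Finally, for (\ref{very_easy_b2}) I would specialize (\ref{very_easy_b1}) to $t=1$: since $\sqrt{1/n}\bigvee(1/n)\le 1$, the right-hand side is then $\lesssim\|\Sigma\|({\bf r}(\Sigma)+1)$, and this bound holds with probability at least $1-e^{-1}>1/2$, so the median $M=\mathrm{Med}(\|\hat\Sigma-\Sigma\|)$ cannot exceed it. I do not expect any genuine difficulty, as the estimate is deliberately loose; the only step requiring care is the $\psi_1$-control of $\|X\|^2$ via Gaussian concentration, which is precisely where the factor $\|\Sigma\|\,{\bf r}(\Sigma)=(\mathbb E\|X\|)^2$ enters.
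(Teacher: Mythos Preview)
Your proposal is correct and follows essentially the same route as the paper: bound $\|\hat\Sigma-\Sigma\|\le n^{-1}\sum_j\|X_j\|^2+\|\Sigma\|$, control $\|\,\|X\|^2\,\|_{\psi_1}\lesssim(\mathbb E\|X\|)^2=\|\Sigma\|{\bf r}(\Sigma)$, apply the $\psi_1$ Bernstein inequality, and then read off the median bound by specializing $t$ (the paper uses $t=\log 2$, you use $t=1$, which is immaterial). Your justification of the $\psi_1$ bound via Gaussian concentration and $\|\Sigma\|^{1/2}\lesssim\mathbb E\|X\|$ is in fact slightly more explicit than the paper's.
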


\begin{proof}

To prove (\ref{very_easy_b1}), note that 
$$
\|\hat \Sigma-\Sigma\|\leq \biggl|n^{-1}\sum_{j=1}^n \|X_j\|^2-{\mathbb E}\|X\|^2\biggr|+ {\mathbb E}\|X\|^2 + \|\Sigma\|.
$$
It remains to observe that 
$$
{\mathbb E}\|X\|^2\lesssim \Bigl({\mathbb E}\|X\|\Bigr)^2=
\|\Sigma\|{\bf r}(\Sigma)
$$
and, by Bernstein's inequality for $\psi_1$-random variables, with probability 
at least $1-e^{-t}$
$$ 
\biggl|n^{-1}\sum_{j=1}^n \|X_j\|^2-{\mathbb E}\|X\|^2\biggr|
\lesssim  
\Bigl\|\|X\|^2\Bigr\|_{\psi_1} \biggl(\sqrt{\frac{t}{n}}\bigvee \frac{t}{n}\biggr)
$$
$$
\lesssim \Bigl({\mathbb E}\|X\|\Bigr)^2 \biggl(\sqrt{\frac{t}{n}}\bigvee \frac{t}{n}\biggr)
=\|\Sigma\|{\bf r}(\Sigma)\biggl(\sqrt{\frac{t}{n}}\bigvee \frac{t}{n}\biggr).
$$

The proof of (\ref{very_easy_b2}) immediately follows by taking $t=\log 2.$

\qed
\end{proof}

Denote 
$$
\delta_0:=D_2 \|\Sigma\|\bigg[{\bf r}(\Sigma)\biggl(\sqrt{\frac{t}{n}}\bigvee \frac{t}{n}\biggr)+{\bf r}(\Sigma)+1\biggr].
$$
We will define $\delta_k$ for $k\geq 1$ as follows:
$$
\delta_k= A+B\sqrt{\delta_{k-1}}. 
$$
It is easy to see that $\delta_1\leq \delta_0$ (provided that constant $D_2$ is chosen 
to be sufficiently large). Note also that 
$$
\delta_{k}-\delta_{k+1}= B \Bigl(\sqrt{\delta_{k-1}}-\sqrt{\delta_k}\Bigr).
$$
Thus, by induction, $\delta_k, k\geq 0$ is a nonincreasing sequence. 
In view of definition of $\delta_k,$ it follows from (\ref{main_bou}) that
for all $k\geq 1$  
\begin{equation}
\label{main_bou_k}
{\mathbb P}\biggl\{\delta_{k-1} > \|W\| \geq \delta_k\biggr\}
\leq e^{-t}.
\end{equation}
Also, by Lemma \ref{very_easy},
\begin{equation}
\label{main_bou_0}
{\mathbb P}\biggl\{\|W\| \geq \delta_0\biggr\}
\leq e^{-t}.
\end{equation}

Let 
$$
\bar \delta = \inf_{k\geq 1}\delta_k = \lim_{k\to\infty}\delta_k.
$$
Then 
$$
\bar \delta = A+B\sqrt{\bar \delta}
$$
It is easy to check that 
\begin{equation}
\label{bar_de}
\bar \delta \lesssim (A\vee B^2). 
\end{equation}
In addition,
$$
\delta_k-\bar \delta = B\Bigl(\sqrt{\delta_{k-1}}-\sqrt{\bar \delta}\Bigr)
\leq B\sqrt{\delta_{k-1}-\bar \delta}.
$$
Define $u_k, k\geq 0$ as follows: $u_0=\delta_0,$
$$
u_k = B \sqrt{u_{k-1}}.
$$
Then, 
$$
\delta_k-\bar \delta \leq u_k, k\geq 0.
$$
It is also easy to check that 
$$
u_k = B^{1+2^{-1}+\dots 2^{-{k-1}}}\delta_0^{2^{-k}} = B^2 \biggl(\frac{\delta_0}{B^2}\biggr)^{2^{-k}}
$$
implying 
$$
0\leq \delta_k-\bar \delta \leq B^2 \biggl(\frac{\delta_0}{B^2}\biggr)^{2^{-k}}.
$$
Let 
$$
\bar k:= \min\biggl\{k: \biggl(\frac{\delta_0}{B^2}\biggr)^{2^{-k}}\leq 2\biggr\}.
$$
Clearly, 
$$
\delta_{\bar k}= \bar \delta + \delta_{\bar k}-\bar \delta 
\leq \bar \delta + 2B^2 \lessim A\vee B^2,
$$
where we also used (\ref{bar_de}). Taking into account (\ref{main_bou_k}) and (\ref{main_bou_0}), we get that for some constant $D_3>0$
\begin{equation}
\label{almost_done}
{\mathbb P}\{\|W\|\geq D_3(A\vee B^2)\}\leq 
{\mathbb P}\biggl\{\|W\| \geq \delta_{\bar k}\biggr\}
\leq (\bar k+1) e^{-t}.
\end{equation}
Observe that 
$$
A\vee B^2 \lesssim M\bigvee \|\Sigma\|\biggl(\sqrt{\frac{t}{n}}\bigvee \frac{t}{n}\biggr) 
$$
and also that, for some constant $c_1>0$ and for $t\geq 1$  
$$
\bar k \lesssim \log \log \frac{\delta_0}{B^2}\lessim \log \log (c_1{\bf r}(\Sigma)) \bigvee 
\log \log (c_1 n).
$$
Using now (\ref{almost_done}) with $t+ \log (\bar k+1)$ instead of $t,$
it is easy to get that with probability at least $1-e^{-t}$
$$
\|\hat\Sigma - \Sigma\| \lesssim M\bigvee \|\Sigma\|
\biggl[\sqrt{\frac{t}{n}}\bigvee \frac{t}{n} \bigvee 
$$
$$
\sqrt{\frac{\log^{[3]} (c_1{\bf r}(\Sigma))}{n}}
\bigvee \frac{\log^{[3]} (c_1{\bf r}(\Sigma))}{n}
\bigvee \sqrt{\frac{\log^{[3]} (c_1 n)}{n}}\bigvee \frac{\log^{[3]} (c_1 n)}{n} 
\biggr],
$$ 
where we used the notation 
$\log^{[3]} x := \log\log \log x.$
In the case when ${\bf r}(\Sigma)\lesssim n,$ we have 
$$
\log^{[3]} (c_1{\bf r}(\Sigma))\leq \log^{[3]} (2c_1 n).
$$
Hence, doubling the value of the constant $c_1$ allows us to drop the two terms involving 
$\frac{\log^{[3]} (c_1{\bf r}(\Sigma))}{n}.$ On the other hand, assume that ${\bf r}(\Sigma)\geq C'n$
with a sufficiently large constant $C'$ (to be determined later). 
Observe that $\log^{[3]} (c_1 {\bf r}(\Sigma))\lesssim {\bf r}(\Sigma)$ and we can use  
a bound for the median $M$ similar to (\ref{very_easy_b3}):
\begin{align}
\label{very_easy_b3'}
&
M\geq {\rm Med}(\|\hat \Sigma\|)-\|\Sigma\|
\geq 
{\rm Med}\biggl(\sup_{\|u\|\leq 1}n^{-1}\sum_{j=1}^{n}\langle X_j,u\rangle^2\biggr) -
\|\Sigma\| 
\\
&
\nonumber
\geq 
{\rm Med}\biggl(\sup_{\|u\|\leq 1}\frac{\langle X_1,u\rangle^2}{n}\biggr) -
\|\Sigma\| 
\geq 
\frac{{\rm Med }\|X\|^2}{n}-\|\Sigma\| 
\\
&
\nonumber
=
\frac{({\rm Med}\|X\|)^2}{n}-\|\Sigma\| 
\geq \|\Sigma\|\biggl(\frac{c'{\bf r}(\Sigma)}{n}-1\biggr)\geq \frac{c'}{2}\|\Sigma\|\frac{{\bf r}(\Sigma)}{n},
\end{align}
for some constants $c'>0$ and for $C'\geq 2/c'.$ We also used the fact that for Gaussian $X$
$$
{\rm Med}\|X\|\asymp {\mathbb E}\|X\| = \Bigl(\|\Sigma\|{\bf r}(\Sigma)\Bigr)^{1/2}.
$$
Thus, we get  
$$ 
\|\Sigma\|\biggl(\sqrt{\frac{\log^{[3]} (c_1{\bf r}(\Sigma))}{n}}
\bigvee \frac{\log^{[3]} (c_1{\bf r}(\Sigma))}{n}\biggr)\lesssim 
\|\Sigma\|\frac{{\bf r}(\Sigma)}{n}\lesssim M.
$$ 
Since also $\frac{\log^{[3]} (c_1 n)}{n}\lesssim 1,$ this implies 
that with some constant $C_1$ and with the same probability  
\begin{align}
\label{sha_sha''}
&
\nonumber 
\|W\|=\|\hat\Sigma - \Sigma\| 
\\
&
\leq C_1\biggl[M\bigvee  \|\Sigma\|
\biggl(\sqrt{\frac{t}{n}}\bigvee \frac{t}{n} 
\bigvee \sqrt{\frac{\log^{[3]} (c_1 n)}{n}}\biggr)\biggr].
\end{align}
Take now $\delta$ to be equal to the expression in the right hand side of bound (\ref{sha_sha''}) and use this value of $\delta$ to do another iteration of 
bound (\ref{main_bou}). This easily yields that with some constant $C>0$ and with probability at least $1-2e^{-t}$
\begin{align}
\label{sha_sha''''}
&
\|W\|=\|\hat\Sigma - \Sigma\| 
\leq 
C\biggl[M\bigvee  \|\Sigma\|
\biggl(\sqrt{\frac{t}{n}}\bigvee \frac{t}{n}\biggr)\biggr].
\end{align}


To complete the proof of concentration inequality (\ref{sha_sha_conc}), note that, for an arbitrary $\delta>0,$ 
on the event where (\ref{conc_bd}) holds and also $\|W\|\leq \delta,$ 
\begin{align}
&
\nonumber
\|\hat\Sigma - \Sigma\|
=
g(X_1,\dots, X_n)
\\
&
\nonumber
\leq 
M_g+D_1 \Bigl(\|\Sigma\|+\|\Sigma\|^{1/2}\sqrt{\delta}\Bigr)\sqrt{\frac{t}{n}}
\\
&
\nonumber
\leq
M+D_1 \Bigl(\|\Sigma\|+\|\Sigma\|^{1/2}\sqrt{\delta}\Bigr)\sqrt{\frac{t}{n}}.
\end{align}
This bound will be used with 
\begin{equation}
\label{del_de}
\delta:=
C\biggl[M\bigvee  \|\Sigma\|
\biggl(\sqrt{\frac{t+2n}{n}}\bigvee \frac{t+2n}{n}\biggr)\biggr].
\end{equation}
Then, in view of bound (\ref{sha_sha''''}), 
$$
{\mathbb P}\{\|W\|\geq \delta\}\leq 2e^{-t-2n}=2 e^{-2n-t} \leq e^{-2n}
$$
(provided that $t\geq 1$).
Note also that 
\begin{align}
&
\nonumber
{\mathbb P}\{g(X_1,\dots, X_n)\geq M\} \geq 
{\mathbb P}\{g(X_1,\dots, X_n)\geq M, \|W\|\leq \delta\} 
\\
&
\nonumber
\geq 
{\mathbb P}\{\|W\|\geq M, \|W\|\leq \delta\} \geq
1/2-{\mathbb P}\{\|W\|>\delta\}\geq 1/2-e^{-2n}\geq 1/4. 
\end{align}
Then, it follows from Lemma \ref{Gaussian_concentration_A}  
that, for a sufficiently large constant $D_1$  and for all $t\geq 1,$ with probability at least 
$1-e^{-t},$
the following bound holds:
$$
g(X_1,\dots, X_n) \geq M- D_1 \Bigl(\|\Sigma\|+\|\Sigma\|^{1/2}\sqrt{\delta}\Bigr)\sqrt{\frac{t}{n}}.
$$
Recall also that $g(X_1,\dots,X_n)=\|W\|$ on the event where $\|W\|\leq \delta$ of 
probability at least $1-2e^{-t-2n}\geq 1-e^{-t}.$ 
Therefore, with probability at least $1-3e^{-t},$
\begin{align}
\label{last_bd}
&
\Bigl|\|\hat\Sigma - \Sigma\|-M\Bigr|
\leq 
D_1 \Bigl(\|\Sigma\|+\|\Sigma\|^{1/2}\sqrt{\delta}\Bigr)\sqrt{\frac{t}{n}}
\end{align}
The result now follows by substituting $\delta$ given by (\ref{del_de}) into bound (\ref{last_bd}),  
doing simple algebra and adjusting the value of constant $D_1$ to get the probability bound 
$1-e^{-t}.$

\qed 

Very recent exponential generic chaining bounds for empirical processes by Dirksen \cite{Dirksen} (see Corollary 5.7) and by Bednorz\cite{Bednorz} (see Theorem 1) imply the following (earlier, Mendelson \cite{Mendelson-1}, Theorem 3.1 obtained another version of exponential generic chaining bounds 
for the same class of processes). 

\begin{theorem}
\label{dirksen}
Let $X,X_1,\dots, X_n$ be i.i.d. random variables in a measurable space $(S,{\mathcal A})$ with common distribution 
$P$ and let ${\cal F}$ be a class of measurable functions on $(S,{\mathcal A}).$
There exists a constant $C>0$ such that for all $t\geq 1$ with probability at 
least $1-e^{-t}$ 
\begin{align}
&
\nonumber
\sup_{f\in {\mathcal F}} 
\left| \frac{1}{n}\sum_{i=1}^n f^2(X_i)-   \mathbb E f^2(X) \right|
\\
&
\nonumber
\leq C\max\left\lbrace\sup_{f\in {\mathcal F}}\|f\|_{\psi_2} 
\frac{\gamma_2({\mathcal F};\psi_2)}{\sqrt{n}},
\frac{\gamma_2^2({\mathcal F};\psi_2)}{n}, 
\sup_{f\in {\mathcal F}}\|f\|_{\psi_2}^2\sqrt{\frac{t}{n}}, \sup_{f\in {\mathcal F}}\|f\|_{\psi_2}^2\frac{t}{n}\right\rbrace.
\end{align}
\end{theorem}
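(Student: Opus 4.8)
The plan is to derive the statement as a specialization of the general exponential generic chaining bounds for quadratic empirical processes cited above (Dirksen \cite{Dirksen}, Corollary 5.7, or Bednorz \cite{Bednorz}, Theorem 1), the real work being to match the abstract $\gamma$-functionals appearing there to the two quantities $\gamma_2(\mathcal{F};\psi_2)$ and $\sup_{f\in\mathcal{F}}\|f\|_{\psi_2}$ that figure in our bound.

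First I would record the increment structure of the process $V_f := \frac{1}{n}\sum_{i=1}^n\bigl(f^2(X_i)-\mathbb{E}f^2(X)\bigr)$. For $f,g\in\mathcal{F}$ the increment $V_f-V_g$ is a normalized sum of i.i.d.\ centered variables $(f^2-g^2)(X_i)-\mathbb{E}(f^2-g^2)(X)$, and factoring $f^2-g^2=(f-g)(f+g)$ together with the product rule for Orlicz norms gives $\|(f^2-g^2)(X)\|_{\psi_1}\lesssim\|f-g\|_{\psi_2}\bigl(\|f\|_{\psi_2}+\|g\|_{\psi_2}\bigr)\lesssim\|f-g\|_{\psi_2}\sup_{h\in\mathcal{F}}\|h\|_{\psi_2}$. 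Thus the increments exhibit the mixed subgaussian/subexponential tail behaviour governed by the single $\psi_2$-metric on $\mathcal{F}$ rescaled by the factor $\sup_h\|h\|_{\psi_2}$, which is exactly the hypothesis under which the cited chaining tail bounds apply.

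Next I would invoke the cited bound, which yields, with probability at least $1-e^{-t}$, a decomposition $\sup_f|V_f|\lesssim S+\Delta_2\sqrt{t/n}+\Delta_\infty\, t/n$, where $S$ is the generic chaining complexity of the process and $\Delta_2,\Delta_\infty$ are the diameters of $\{f^2\}$ in the $L_2(P)$- and (in the unbounded case) $\psi_1$-metrics. The complexity $S$ splits, as in the subgaussian computation underlying Mendelson's Theorem \ref{men}, into a subgaussian part $\sup_f\|f\|_{\psi_2}\,\gamma_2(\mathcal{F};\psi_2)/\sqrt{n}$ and a subexponential part $\gamma_2^2(\mathcal{F};\psi_2)/n$, giving the first two terms; since the chaining is carried out directly on increments, no symmetry or centering of $\mathcal{F}$ is required here. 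For the fluctuation diameters, the inequality $\|f^2\|_{L_2(P)}\le\|f^2\|_{\psi_1}\lesssim\|f\|_{\psi_2}^2$ shows that both $\Delta_2$ and $\Delta_\infty$ are dominated by $\sup_f\|f\|_{\psi_2}^2$, producing the last two terms. Taking the maximum over these four contributions closes the argument.

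The main obstacle I anticipate is the bookkeeping at the interface with the abstract results: those are phrased through several $\gamma$-functionals taken with respect to two metrics $d_2$ and $d_\infty$, and the crux is to verify that in the present subgaussian setting these collapse to $\gamma_2(\mathcal{F};\psi_2)$ and $\sup_f\|f\|_{\psi_2}$. In particular one must check that the $\gamma_1$-type complexity of the product index set, responsible for the subexponential part of the chaining, is controlled by $\gamma_2^2(\mathcal{F};\psi_2)$ rather than by an independent $\gamma_1$ functional, and that in the absence of a uniform sup-norm bound the role of the $d_\infty$-diameter is legitimately played by $\sup_f\|f\|_{\psi_2}^2$. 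Once these identifications are justified the bound holds for an arbitrary measurable class $\mathcal{F}$, with no centering or symmetry assumption.
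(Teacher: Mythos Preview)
Your proposal is aligned with the paper's treatment: the paper does not give its own proof of this theorem but states it as a direct consequence of the exponential generic chaining tail bounds of Dirksen \cite{Dirksen}, Corollary 5.7, and Bednorz \cite{Bednorz}, Theorem 1 (with Mendelson \cite{Mendelson-1}, Theorem 3.1, cited as an earlier version). Your plan to specialize those abstract bounds---identifying the relevant $\gamma$-functionals and diameters with $\gamma_2(\mathcal{F};\psi_2)$ and $\sup_{f\in\mathcal{F}}\|f\|_{\psi_2}$ via the factorization $f^2-g^2=(f-g)(f+g)$ and the product rule for Orlicz norms---is exactly the intended route, and the bookkeeping you anticipate is the only content beyond quoting the references.
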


This result together with the argument used in the proof of the upper bound of Theorem \ref{th_operator} easily implies the following generalization of Corollary \ref{cor_3}.

\begin{theorem}
\label{subgau}
Let $X,X_1,\ldots,X_n$ be i.i.d. weakly square integrable centered random vectors in $E$ with covariance operator $\Sigma.$ If $X$ is subgaussian and pregaussian, then
there exists a constant 
$C>0$ such that, for all $t\geq 1,$ with probability at least $1-e^{-t},$ 
 \begin{align}
\label{sha_sha_conc_AB}
&
\|\hat\Sigma - \Sigma\|\leq 
C\|\Sigma\|\biggl(\sqrt{\frac{{\bf r}(\Sigma)}{n}}\bigvee \frac{{\bf r}(\Sigma)}{n}
\bigvee 
\sqrt{\frac{t}{n}}\bigvee \frac{t}{n}\biggr).
\end{align}
\end{theorem}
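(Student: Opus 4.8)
The plan is to deduce Theorem~\ref{subgau} from the high-probability generic chaining bound of Theorem~\ref{dirksen} by applying it to the very same function class that appeared in the proof of the upper bound of Theorem~\ref{th_operator}. Recall that the $\eps$-net-free symmetrization argument at the start of that proof showed
\begin{align*}
\|\hat\Sigma-\Sigma\|
\leq
2\sup_{f\in{\mathcal F}}\left|\frac1n\sum_{i=1}^n f^2(X_i)-\E f^2(X)\right|,
\qquad
{\mathcal F}=\bigl\{\langle\cdot,u\rangle:u\in U_{E^{\ast}}\bigr\}.
\end{align*}
This pointwise inequality already reduces the theorem to controlling the empirical process on the right, and it is exactly the quantity that Theorem~\ref{dirksen} bounds in probability. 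So the first step is simply to quote this reduction and invoke Theorem~\ref{dirksen} with $t$ fixed, producing a bound in terms of $\sup_{f\in{\mathcal F}}\|f\|_{\psi_2}$ and $\gamma_2({\mathcal F};\psi_2)$.

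The second step is to re-use verbatim the two parameter estimates established inside the proof of Theorem~\ref{th_operator}. Because $X$ is subgaussian, the $\psi_2$-norm of each linear functional is equivalent to its $L_2$-norm, giving $\sup_{f\in{\mathcal F}}\|f\|_{\psi_2}\lesssim\|\Sigma\|^{1/2}$; and because $X$ is pregaussian, Talagrand's majorizing-measure lower bound (Theorem~\ref{tal}) applied to the associated Gaussian process yields $\gamma_2({\mathcal F};\psi_2)\lesssim\gamma_2({\mathcal F};L_2)\lesssim\E\|Y\|$, where $Y$ is the Gaussian vector with covariance $\Sigma$. Substituting these into the four-term maximum of Theorem~\ref{dirksen} and recalling $(\E\|Y\|)^2=(\E\|X\|)^2=\|\Sigma\|{\bf r}(\Sigma)$, the first two terms collapse to $\|\Sigma\|\bigl(\sqrt{{\bf r}(\Sigma)/n}\vee{\bf r}(\Sigma)/n\bigr)$ while the last two terms collapse to $\|\Sigma\|\bigl(\sqrt{t/n}\vee t/n\bigr)$. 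Taking the overall maximum gives precisely~\eqref{sha_sha_conc_AB}, up to the constant factor $2$ absorbed into $C$.

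The routine bookkeeping is the only real content, so there is no deep obstacle here: the theorem is essentially a repackaging of the expectation computation of Theorem~\ref{th_operator} with the in-expectation chaining bound (Theorem~\ref{men}) upgraded to its in-probability form (Theorem~\ref{dirksen}). The one point that merits a sentence of care is the limiting argument for the infinite-dimensional case: as in the proof of Theorem~\ref{spectrum_sharper}, I would first prove the bound for the finite-rank truncations $X^{(m)}=\sum_{k=1}^m Z_k x_k$ using Theorem~\ref{dirksen} (whose statement is for a genuine probability space, which the finite-dimensional Gaussian realization supplies), and then pass to the limit $m\to\infty$ using that $\|\Sigma^{(m)}-\Sigma\|\to0$ and $\E\|\hat\Sigma^{(m)}-\hat\Sigma\|\to0$, so that both sides of~\eqref{sha_sha_conc_AB} converge and the constant $C$ is preserved. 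Since Theorem~\ref{dirksen} is already stated on an abstract measurable space $(S,{\mathcal A})$, one can alternatively apply it directly with $S=E$, which dispenses with the truncation entirely; I would remark on this to keep the argument short.
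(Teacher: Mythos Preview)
Your proposal is correct and follows exactly the route the paper indicates: apply Theorem~\ref{dirksen} to the class ${\mathcal F}=\{\langle\cdot,u\rangle:u\in U_{E^{\ast}}\}$ and re-use the reduction and parameter estimates from the upper bound proof of Theorem~\ref{th_operator}. One small slip to fix: for merely subgaussian $X$ one does not have $(\E\|Y\|)^2=(\E\|X\|)^2$ in general, but the argument only needs the identity $(\E\|Y\|)^2=\|\Sigma\|\,{\bf r}(\Sigma)$, which holds by definition since ${\bf r}(\Sigma)$ is computed with the Gaussian vector $Y$ of covariance $\Sigma$.
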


Note that the proof of concentration inequality of Theorem \ref{spectrum_sharper} does not rely on generic chaining bounds, it relies only on the Gaussian isoperimetric inequality. The bound of Theorem 
\ref{subgau} (based on the generic chaining method) could be used to provide a shortcut in the proof 
of the concentration inequality. To this end, instead of using very rough initial bound $\delta_0$ based on Lemma \ref{very_easy} one should use much more precise bound of Theorem \ref{subgau}. 
In this case, there is no need to implement an iterative argument improving the bound, the concentration inequality in its explicit form (Theorem \ref{cor_2}) follows just by an application of the Gaussian isoperimetric inequality. Adamczak \cite{Adamczak} suggested an alternative approach to the proof of Theorem \ref{cor_2}. It is based on a version of a concentration inequality for Gaussian chaos 
and on some other tools (such as Gordon-Chevet inequality), but it does not rely on the generic 
chaining bounds.

{\bf Acknowledgments.} The authors are very thankful to Sjoerd Dirksen for attracting their attention 
to paper \cite{Dirksen}. Radek Adamczak pointed out that a similar result was proved in \cite{Bednorz}.

The authors are especially thankful to Radek Adamczak for providing an alternative proof of the concentration inequality and for very helpful discussions. The initial version of Theorem \ref{cor_2}
was under an extra assumption that ${\bf r}(\Sigma)\lesssim e^{2n}.$ We improved our argument 
after Adamczak had provided his alternative proof.


\begin{thebibliography}{99}
\bibitem{Adamczak} Adamczak, R. (2014) Concentration for empirical covariance operators.
{\it Personal Communication.}
\bibitem{Ahlswede} Ahlswede, R. and Winter, A. (2002) Strong converse for identifications 
via quantum channels. {\it IEEE Transactions on Information Theory}, 48, 3, pp. 569--679.

\bibitem{Bednorz} Bednorz, W. (2014) Concentration via chaining method and its applications. 
Preprint. {\it arxiv: 1405.0676v2}. 

\bibitem{Dirksen} Dirksen, S. (2014) Tail bounds via generic chaining. Preprint. 
{\it arxiv: 1309.3522v2}.

\bibitem{Klartag} Klartag, B. and Mendelson, S. (2005) Empirical processes and random projections.  
{\it J. Funct. Anal.}, 225, no. 1, 229--245. 

\bibitem{Koltchinskii} Koltchinskii, V. (2011) Oracle Inequalities in Empirical Risk Minimization and Sparse Recovery Problems. Lecture Notes in Mathematics 2033, Springer. 

\bibitem{Kwapien} Kwapien, S. and Szymanski, B. (1980) Some remarks on Gaussian measures 
in Banach spaces. {\it Probability and Mathematical Statistics}, v. 1, n. 1, pp. 59--65. 

\bibitem{Ledoux} Ledoux, M. and Talagrand, M. (1991) Probability in Banach Spaces. Springer. 

\bibitem{Lounici} Lounici, K. (2012) High-dimensional covariance matrix estimation 
with missing observations. {\it Bernoulli}, to appear; {\it arxiv: 1201.2577}


\bibitem{Pisier} Lust-Picard, F. and Pisier, G. (1991) Non-commutative Khintchine and Paley 
inequalities. {\it Arkiv f\"ur Matematik}, 29(2): 241-260 (1991).


\bibitem{Mendelson} Mendelson, S. (2010) Empirical Processes with a Bounded $\psi_1$-Diameter.
{\it Geometric and Functional Analysis,} 20(4), 988-1027. 

\bibitem{Mendelson-1} Mendelson, S. (2012) Oracle inequalities and the isomorphic method. 
Prpeprint. 

\bibitem{Rudelson} Rudelson, M. (1999) Random vectors in the isotropic position. 
{\it Journal of Functional Analysis}, 164, 60--72. 

\bibitem{Talagrand} Talagrand, M. (2005) The Generic Chaining. Springer.

\bibitem{Tropp} Tropp, J.A. (2012) User-friendly tail bounds for sums of random matrices,
{\it Found. Comput. Math.}, 12, 4, pp. 389-434.

\bibitem{Vershynin} Vershynin, R. (2012) Introduction to the non-asymptotic analysis of random matrices. In: {\it Compressed Sensing, Theory and Applications. Edited by Y. Eldar and
G. Kutyniok}, Chapter 5, pp. 210–268, Cambridge University Press.


\end{thebibliography}
\end{document}